\begin{document}
\def\a{{\mathbf a}}\def\l{{\lambda}}	
\def\b{{\beta}} 	\def\ra{{\rangle}}	\def\PP{{\mathbb P}}\def\NN{{\mathbb N}}  	\def\ZZ{{\mathbb Z}}  
\def\u{{\mathbf u}}\def\v{{\mathbf v}} \def\S{{\mathcal S}}\def\x{{\mathbf x}}
\def\bb{{\mathbf b}}

\def\F{{\mathcal F}}

\def\sort{\operatorname {sort}}\def\codim{\operatorname {codim}}	
\def\coker{\operatorname {coker}}	
\def\Hom{\operatorname {Hom}}
\def\Int{\operatorname {Int}}
\def\Ker{\operatorname {Ker}}  	
\def\reg{\operatorname {reg}}		   
\def\supp{\operatorname {supp}} 
\def\Tor{\operatorname {Tor}} 	

\newtheorem{Theorem}{Theorem}[section]
\newtheorem{Corollary}[Theorem]{Corollary}
\newtheorem{Lemma}[Theorem]{Lemma}
\newtheorem{Proposition}[Theorem]{Proposition}
\newtheorem{Fact}[Theorem]{Fact}
\newtheorem{Example}[Theorem]{Example}
\newtheorem{Definition}[Theorem]{Definition}
\newtheorem{Remark}[Theorem]{Remark}
\numberwithin{equation}{section}

\title{The Koszul property of pinched Veronese varieties}  
\author{Thanh Vu}
\address{Department of Mathematics, University of California at Berkeley, Berkeley, CA 94720}
\email{vqthanh@math.berkeley.edu}
\subjclass[2010]{Primary 13F55, 05E40}
\keywords{Koszul algebras, Pinched Veronese, Posets of open chains.}
\date{\today}

\begin{abstract} Let $K$ be an arbitrary field. Let $n,d \ge 2$ be positive integers. Let $V(n,d)$ be the set of all lattice points $\mathbf b = (b_1, ..., b_n)$ in $\NN^n$ such that $\sum_{i=1}^n b_i = d$. Let $\Gamma = V(n,d) \setminus \{ \mathbf a \}$ for some element $\mathbf a  \in V(n,d)$. In this paper we prove that the semigroup ring $K[\Gamma]$ is Koszul unless $d \ge 3$ and ${\mathbf a} = (0, ...,0, 2, d-2)$ or one of its permutations. This generalizes results of Caviglia, Conca, and Tancer.
\end{abstract}
\maketitle

\section{Introduction}
Let $K$ be an arbitrary field. A standard graded $K$-algebra $R$ is Koszul if the residue field $K$ has linear resolution over $R$. See volume \cite{PP} by Polishchuk and Positselki for an extensive treatment with various interesting aspects of Koszul algebras. 

In general, it can be difficult to prove that a given graded algebra is Koszul. For example, for any positive integer $s$, in \cite{R}, Roos gave an example of a quadratic algebra $R$ whose minimal free resolution of the residue field $K$ over $R$ is linear up to $s$ steps, but not linear at the $(s+1)$st step. Thus knowing that the resolution of the residue field $K$ over $R$ is linear up to some finite step does not guarantee that $R$ is Koszul.

In 1993, in conversation with Peeva, Sturmfels asked if the pinched Veronese $K[x^3,x^2y,xy^2,y^3,x^2z,xz^2,y^2z,yz^2,z^3]$ is Koszul. This question remained open until Caviglia answered in affirmative in \cite{C} in 2009, (see also \cite{CC} for a new proof and some generalization of this result).

In this paper we introduce a new method and use it to prove:

\begin{Theorem}\label{main} Let $n, d \ge 2$ be positive integers. Let $V(n,d)$ be the set of all lattice points ${\mathbf b} = (b_1, ..., b_n)$ in $\NN^n$ such that $\sum_{i=1}^n b_i = d$. Let $\Gamma = V(n,d) \setminus \{\a\}$ for some element $\a \in V(n,d)$. The algebra $K[\Gamma]$ is Koszul unless $d \ge 3$ and $\a = (0, ..., 0, 2, d-2)$ or one of its permutations.
\end{Theorem}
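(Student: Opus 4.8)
The plan is to prove Koszulness by exhibiting, for each admissible $\Gamma = V(n,d) \setminus \{\a\}$, a combinatorial structure on the defining toric ideal that guarantees the residue field has a linear resolution. The natural candidates are Gröbner-basis methods (a quadratic Gröbner basis implies Koszul) or, more flexibly, the ``$\Koszul$-filtration''/strong-Koszul arguments used by Caviglia--Conca for the classical pinched Veronese; the keywords in the abstract (``posets of open chains'') strongly suggest the author's new method is of the latter flavor, so I would aim to build an explicit family of ideals closed under the relevant colon operations. First I would set up notation: write $S = K[t_\bb : \bb \in \Gamma]$, let $I_\Gamma = \ker(S \to K[\Gamma])$ be the toric ideal, and record that $K[V(n,d)]$ (the full Veronese) is Koszul, so the issue is entirely the effect of deleting the single lattice point $\a$. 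I would then reduce to a small number of cases for $\a$ by symmetry (permuting coordinates) and by the observation that if some coordinate of $\a$ is $0$ we are really looking at a pinched Veronese inside fewer variables plus ``extra'' generators that interact mildly.

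The core of the argument, as I see it, is a careful induction. I would try to show $K[\Gamma]$ is Koszul by finding a generator $t_\bb \in \Gamma$ such that (i) $K[\Gamma \setminus \{\bb\}]$ is again of the allowed form (or is itself known to be Koszul by a separate, easier argument), and (ii) the colon ideal $(I_\Gamma : t_\bb)$ is generated by linear forms, with the quotient again Koszul — this is exactly the mechanism behind Koszul filtrations. Concretely, for $\a = (\ldots, a_{n-1}, a_n)$ not equal to a forbidden $(0,\ldots,0,2,d-2)$, I would look for a lattice point $\bb \in \Gamma$ adjacent to $\a$ in the lattice (differing by moving one unit between two coordinates) whose removal either restores the full Veronese-like behavior or lands in a strictly smaller instance, and then verify the linear-quotient condition by a direct analysis of which binomials in $I_\Gamma$ are divisible by $t_\bb$. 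The ``posets of open chains'' in the title presumably organize exactly these chains of successive point-removals and the bookkeeping of colon ideals along them; I would phrase the induction along maximal chains in such a poset, with the base case being a genuine Veronese (or a Segre-like product) that is classically Koszul.

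The main obstacle I expect is precisely the verification that the relevant colon ideals are generated in degree one — i.e., that no ``sporadic'' quadratic or higher-degree obstruction appears when we pinch. This is where the excluded case $\a = (0,\ldots,0,2,d-2)$ must enter: for that $\a$ the algebra $K[\Gamma]$ is known (from the Caviglia--Conca--Tancer line of work) to fail to be Koszul, so any argument I give must break down exactly there, and tracking *why* the colon-ideal computation produces a genuine degree-$\ge 3$ syzygy for that $\a$ — but not for its ``neighbors'' like $(0,\ldots,0,1,d-1)$ (which is not actually a pinch, since that point is extremal) or $(0,\ldots,0,3,d-3)$ — is the delicate combinatorial heart of the proof. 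I would handle this by isolating a local configuration: analyze the subalgebra generated by the monomials ``near'' $\a$ (those sharing a support with $\a$), show its toric ideal has the expected quadratic generators and linear syzygies in all cases except the forbidden one, and then glue this local picture to the global Veronese structure via a flatness/associated-graded argument so that Koszulness of the pieces implies Koszulness of $K[\Gamma]$. A secondary technical point will be making the induction's base and the gluing step work uniformly in the characteristic of $K$, since the statement is characteristic-free; I would lean on combinatorial (squarefree-degeneration or filtration) arguments rather than anything cohomological to keep the field arbitrary.
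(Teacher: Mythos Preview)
Your proposal misreads the paper's central combinatorial object. The ``open chains'' in the title are not chains of successive generator-removals feeding a Koszul filtration; they are chains $0 = c_0 < c_1 < \cdots < c_k = \lambda$ in the divisibility poset with each link $c_{i+1}-c_i \in \Gamma$. By the Laudal--Sletsjoe theorem the reduced homology of the simplicial complex $\Gamma_\lambda$ of such open chains computes $\beta^R_{i,\lambda}(K)$, so Koszulness is equivalent to every $\Gamma_\lambda$ having only top-dimensional homology. The paper's new idea is to compare $\Gamma_\lambda$ with the analogous complex $\Delta_\lambda$ for the full Veronese (known Koszul): one totally orders the facets of $\Delta_\lambda$ not in $\Gamma_\lambda$ and peels them off one at a time via Mayer--Vietoris, with the technical work (Lemmas~3.1--3.6) controlling the intersections $F_{<p}\cap p$ well enough to show no homology appears below dimension $|\lambda|-7$. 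This gives $\reg_R K \le 5$, and the Avramov--Peeva theorem (finite regularity of $K$ implies Koszul) finishes.

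Your Koszul-filtration scheme---finding $\bb$ with $(I_\Gamma:t_\bb)$ linearly generated and inducting on $|\Gamma|$---is exactly one of the techniques the paper says has \emph{not} been made to work for this family, and you give no mechanism beyond ``direct analysis'' for the colon-ideal check; there is no evident reason such a $\bb$ exists uniformly. A quadratic Gr\"obner basis \emph{is} produced, but only for the single non-$2$-full case $\a=(0,\ldots,0,1,d-1)$, $d\neq 3$ (which, contrary to your aside, is a genuine pinch: the extremal points of $V(n,d)$ are the $(0,\ldots,0,d,0,\ldots,0)$). For every other admissible $\a$ the paper abandons filtration and Gr\"obner arguments entirely in favor of the chain-complex topology. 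So the gap in your outline is not a matter of detail: the key ingredient---bounding $\reg_R K$ via the homology of $\Gamma_\lambda$ and a facet-by-facet comparison with $\Delta_\lambda$---is absent, and the substitute you propose is one the paper explicitly flags as insufficient here.
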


From now on, we assume that $a_1 \le a_2 \le \cdots \le a_n$. Note that, in this class of $\Gamma = V(n,d) \setminus \{\a\}$, very little was known about the Koszul property of $K[\Gamma]$. In \cite{S}, Sturmfels showed that $R$ is Koszul when $\a$ is of the form $(0, ..., 0, d)$. The case of Caviglia's theorem is the case when $n =d = 3$ and $\a = (1,1,1)$. Recently, in \cite{T}, Tancer showed the Koszul property of $K[\Gamma]$ when $\Gamma = V(n,n) \setminus \{(1, ..., 1)\}.$ Our result recovers and generalizes work of Cagvilia, Conca and Tancer. Moreover, it provides a new large family of Koszul algebras.

Various techniques have been devised to prove that an algebra $R$ is Koszul. These include proving that $R$ has a quadratic Gr\"obner basis or Koszul filtration. Other techniques involve proving the finiteness of regularity of residue field by Avramov and Peeva in \cite{AP}, or finiteness of regularity of a module over Frobenius-like endomorphisms by Nguyen and the author in \cite{NV}. Each technique has proved to be useful in certain cases, but has not been successfully applied to resolve the problem in this class of $\Gamma.$ The proofs of Caviglia in \cite{C}, Caviglia and Conca in \cite{CC} and Tancer in \cite{T} are all different, and quite special to their setup.

\medskip

{\bf Outline of the proof of Theorem \ref{main}.} The necessity of the condition that when $d \ge 3$, $\a \neq (0, ..., 0, 2, d-2)$ is easy. In this case, the cubic relation $y^3 - xz^2$ is minimal, where $x,y,z$ correspond to the elements $(0, ..., 0, 3, d-3)$, $(0, ..., 0, 1, d-1)$, and $(0, ..., 0, d)$ respectively. Thus, we may assume from now on that $\a$ is different from $(0, ..., 0, 2, d-2)$ when $d \ge 3$. 

To prove that $K[\Gamma]$ is Koszul we distinguish two cases, according to whether $\Gamma + \Gamma = V(n,2d)$ or not. We call a subset $\Gamma$ of $V(n,d)$ $2$-full if $\Gamma + \Gamma = V(n,2d)$. In our class of $\Gamma$, given that $\a \neq (0, ..., 0, d)$, it is not $2$-full if and only if $\a = (0, ..., 0, 1, d-1)$. In this case, we will first prove that $K[\Gamma]$ is quadratic, and then prove that it has a quadratic Gr\"obner basis with respect to the grevlex order. Details are presented in section \ref{sec2}.

In the case $\Gamma$ is $2$-full, we use a theorem of Avramov and Peeva \cite[Theorem 2]{AP} stating that $R$ is Koszul if the regularity of the residue field $K$ over $R$, $\reg_RK$, is finite. To bound $\reg_R K$, we use the description of betti numbers of the residue field $K$ over the toric ring $K[\Gamma]$ as the dimensions of homology groups of certain simplicial complexes given in \cite{LS}. 

Let $t_1, ..., t_n$ be variables. For each $\alpha = (\alpha_1, ..., \alpha_n)\in \NN^n$, let $t^\alpha = t_1^{\alpha_1}\cdots t_n^{\alpha_n}$. The semigroup ring $R = K[\Gamma]$ is the subalgebra of $K[t_1, ..., t_n]$ parametrized by $x_\alpha = t^\alpha$ for $\alpha \in \Gamma$. Let $S = K[x_\alpha: \alpha  \in \Gamma]$. Also, let $I = I(\Gamma)$ be the defining ideal of $R$ in $S$. The algebra $S$, (and so is $R$) is multi-graded with $\deg x_\alpha = \alpha.$ Under this grading, the betti numbers of the residue field $K$ over $R$ can be described as follows.

For each $\lambda$ in the semigroup generated by $\Gamma$, let $\Gamma_\lambda$ be the simplicial complex of open chains from $0$ to $\lambda$ whose links are in $\Gamma$. Note that an open chain is a chain not containing the two end points. Looking at the bar resolution of the residue field $K$ over $R = K[\Gamma]$, and restricting it to certain multi-degree $\l$, one gets

\begin{Theorem}[Laudal-Sletsjoe]\label{relativeHomology} For each non-negative integer $i$ and each $\lambda$ in the semigroup generated by $\Gamma$,
$$\b_{i,\lambda}^R (K) = \dim_K \tilde H_{i-2} (\Gamma_\lambda,K).$$
\end{Theorem}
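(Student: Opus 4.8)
The plan is to deduce the identity from a degreewise isomorphism of complexes: for each $\lambda$, the multidegree-$\lambda$ strand of the normalized bar resolution of $K$ over $R$ should coincide, after shifting the homological degree by $2$, with the augmented simplicial chain complex of $\Gamma_\lambda$ over $K$. This is essentially the argument of Laudal and Sletsj\o e \cite{LS}, and I would organize it as follows.

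First I would set up the normalized bar resolution. Write $\mathfrak m$ for the graded maximal ideal of $R=K[\Gamma]$ and put $\bar R=\mathfrak m$; as a $K$-vector space $\bar R$ has basis $\{t^\gamma : \gamma\in\langle\Gamma\rangle,\ \gamma\neq 0\}$, where $\langle\Gamma\rangle$ is the semigroup generated by $\Gamma$. The bar resolution of $K$ over $R$ has $i$-th term $R\otimes_K\bar R^{\otimes_K i}$; applying $K\otimes_R-$ gives the complex $B_\bullet$ with $B_i=\bar R^{\otimes_K i}$ and differential
$$\partial[\gamma_1|\cdots|\gamma_i]=\sum_{j=1}^{i-1}(-1)^j\,[\gamma_1|\cdots|\gamma_j+\gamma_{j+1}|\cdots|\gamma_i],$$
the two outer terms of the usual bar differential being zero because $\bar R\to K$ vanishes. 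Since the bar resolution resolves $K$, one has $\Tor_i^R(K,K)=H_i(B_\bullet)$, and the $\ZZ^n$-grading $\deg t^\gamma=\gamma$ makes $B_\bullet$ a complex of multigraded vector spaces, so that $\b_{i,\lambda}^R(K)=\dim_K H_i(B_\bullet)_\lambda$.

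Next I would fix $\lambda\in\langle\Gamma\rangle$ and identify $(B_\bullet)_\lambda$ with a shift of the chain complex of $\Gamma_\lambda$. A $K$-basis of $(B_i)_\lambda$ is the set of tuples $[\gamma_1|\cdots|\gamma_i]$ with $\gamma_j\in\langle\Gamma\rangle\setminus\{0\}$ and $\gamma_1+\cdots+\gamma_i=\lambda$; for each $i$ this is finite, and since each $\gamma_j$ has positive total degree only finitely many $i$ occur, so $(B_\bullet)_\lambda$ is a bounded complex of finite-dimensional spaces. Passing to partial sums $\mu_k=\gamma_1+\cdots+\gamma_k$ identifies these tuples with chains $0=\mu_0\prec\mu_1\prec\cdots\prec\mu_{i-1}\prec\mu_i=\lambda$ in $\langle\Gamma\rangle$, where $\prec$ is divisibility (a partial order because $\langle\Gamma\rangle\subseteq\NN^n$ is pointed); the interior elements $\mu_1,\dots,\mu_{i-1}$ form an $(i-1)$-element chain in the open interval $(0,\lambda)$, i.e.\ an $(i-2)$-face of $\Gamma_\lambda$. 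This produces a multidegree-compatible identification $(B_i)_\lambda\cong\tilde C_{i-2}(\Gamma_\lambda,K)$ with the augmented chain group, under which deleting an interior point $\mu_j$ from a chain corresponds exactly to merging $\gamma_j$ with $\gamma_{j+1}$; so, fixing orientations consistently, $\partial$ becomes the simplicial boundary up to the standard sign. Then $(B_\bullet)_\lambda\cong\tilde C_{\bullet-2}(\Gamma_\lambda,K)$ as complexes, and taking homology gives $\b_{i,\lambda}^R(K)=\dim_K\tilde H_{i-2}(\Gamma_\lambda,K)$.

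The hard part here is not conceptual but bookkeeping: (i) reconciling the signs of the bar differential with those of the simplicial boundary under the above bijection, which comes down to a careful, consistent choice of orderings on chains; (ii) the low-degree terms, where one checks $(B_0)_\lambda=0$ and $(B_1)_\lambda=K\cdot[\lambda]\cong\tilde C_{-1}(\Gamma_\lambda,K)$ (the empty face) for $\lambda\neq 0$, so that the shift by $2$ is uniform, the case $\lambda=0$ (where $\b_{0,0}^R(K)=1$) being excluded or handled by convention; and (iii) checking that ``the simplicial complex of open chains from $0$ to $\lambda$ whose links are in $\Gamma$'' is precisely the order complex of the open interval $(0,\lambda)$ in $\langle\Gamma\rangle$ — this is exactly the complex that the merge-differential forces, and it is where the statement acquires its combinatorial meaning. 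The substance of the proof is this translation between ordered factorizations of $\lambda$ and chains in $\Gamma_\lambda$.
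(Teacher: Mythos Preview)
Your proposal is correct and follows precisely the approach the paper indicates: the paper does not prove this theorem but attributes it to Laudal--Sletsj\o e \cite{LS}, merely sketching in one sentence that it comes from ``looking at the bar resolution of the residue field $K$ over $R=K[\Gamma]$, and restricting it to certain multi-degree $\lambda$.'' Your write-up fills in exactly this sketch, including the identification of the degree-$\lambda$ strand of the normalized bar complex with the (shifted) augmented chain complex of the order complex of $(0,\lambda)$, which is indeed the complex $\Gamma_\lambda$ once one observes that every chain in the open interval refines to a maximal chain with links in $\Gamma$.
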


Under the standard grading, for each $\l$ in the semigroup generated by $\Gamma$, if we denote $|\l| = (\sum_{i=1}^n \l_i)/d$, then the regularity of $K$ over $R$ can be defined by 

$$\reg_R K = \sup_{i, \l}\{ |\l| - i: \b_{i, \l}^R (K) \neq 0\}.$$

Note that the simplicial complex $\Gamma_\l$ is pure of dimension $|\l|-2$. By Theorem \ref{relativeHomology}, to prove that $K[\Gamma]$ is Koszul is equivalent to proving that $\Gamma_\lambda$ has at most top dimensional homology for each $\lambda$. 

Moreover, we know that the Veronese ring $K[V(n,d)]$ is Koszul. Thus if we denote by $\Delta_\lambda$ the simplicial complex of open chains from $0$ to $\lambda$ whose links are in $V(n,d)$ then $\Delta_\lambda$ has at most top dimensional homology. 

Fix an element $\lambda$ in the semigoup generated by $\Gamma$. Our novel idea is to compare the simplicial complex $\Gamma_\lambda$ with the simplicial complex $\Delta_\lambda$. Note that $\Gamma_\l$ is obtained from $\Delta_\l$ by removing the facets whose corresponding chains have at least a link that is not in $\Gamma$. Using Theorem \ref{relativeHomology}, to prove the finiteness of regularity of $K$ over $R$, we show that we can order the set of chains so that the process of removing chains from $\Delta_\lambda$ to obtain $\Gamma_\lambda$ would not result in homology in too low dimensions.

To be more precise, let us introduce some more notation. We order the set of elements of $V(n,d)$ by lexicographic order. For any two end points $a,b$, the set of closed chains from $a$ to $b$ whose links are in $V(n,d)$ is denoted by $P(a,b)$. We order the chains in $P(a,b)$ as follows. For a closed chain $x = x^1\cdots x^n$ in $P(a,b)$, we denote by $\deg_\a x$ the number of link $x^i$ such that $x^i  = \a$, and called it $\a$-degree of $x$. For two chains $x = x^1\cdots x^n$ and $y = y^1\cdots y^n$ in $P(a,b)$, we say that $x$ is larger than $y$ if either $\deg_\a x > \deg_\a y$ or $\deg_\a x = \deg_\a y$ and $x > y$ in lexicographic order. The set of open chains from $a$ to $b$ is denoted by $OP(a,b)$. For each open chain $x$ in $OP(a,b)$, we denote by $\bar x$ its unique closed chain from $a$ to $b$ whose corresponding open chain is $x$. We then denote $\deg_\a x = \deg_\a \bar x$, and say that open chain $x$ is larger than open chain $y$ if $\bar x > \bar y$. 

The open chains in $\Delta_\lambda$ are totally ordered by this ordering, and $\Gamma_\lambda$ consists of exactly those chains in $\Delta_\lambda$ of $\a$-degrees $0$. In other words, we have 
$$\Delta_\lambda = \Gamma_\lambda \cup p^1 \cup \cdots \cup p^k$$
where $p^1 < \cdots < p^k$ are open chains from $0$ to $\lambda$ of $\a$-degrees at least $1$.

Fix a chain $p$ in $\Delta_\lambda$ of $\a$-degree at least $1$, we denote by $F_{<p}$ the set of all chains in $\Delta_\lambda$ less than $p$. We will prove in Lemma \ref{dim} that the simplicial complex $F_{<p} \cap p$ has dimension $|\l|-3$. We then prove that it has no homology in dimensions $\le |\l|-7$ by analyzing its facet structure. From this, by downward induction on $i$, we prove that $F_{<p^i}$ has no homology in dimensions $\le|\l| - 8$ for all $i$. By Theorem \ref{relativeHomology}, this implies that $\reg_R K \le 5$, see proofs in section \ref{pfmain} for more details.

\section{Proof of the main theorem in the case $\Gamma$ is not $2$-full}\label{sec2}

In this section, we prove the main theorem in the case $\Gamma$ is not $2$-full, that is $\Gamma + \Gamma \neq V(n,2d)$. In our class of $\Gamma$, given that $\a \neq (0, ..., 0, d)$, and $\a \neq (0, ..., 0, 2, d-2)$ or one of its permutation, it is not $2$-full if and only if $\a = (0,..., 0,1, d-1)$ and $d \neq 3$. In this case we prove that $R = K[\Gamma]$ is quadratic in Theorem \ref{quadratic}. The main result of this section is Theorem \ref{Koszul} where we prove that $R$ has a quadratic Gr\"obner basis in graded reverse lexicographical order. 

To prove that $I = I(\Gamma)$ is generated by quadrics, we use the description of betti numbers of $I$ in term of homology groups of certain simplicial complexes given by Bruns and Herzog in \cite{BH}. Denote by $(\Gamma)$ the semigroup generated by $\Gamma$. 

\begin{Definition}[Squarefree divisor simplicial complex]\label{divisorcomplex} For each $\l \in (\Gamma)$, let $D_\l$ be the simplicial complex on the vertex set $\Gamma$ such that $F \subseteq \Gamma$ is a face of $D_\l$ if and only if 
$$\l - \sum_{\alpha \in F} \alpha \in (\Gamma).$$
\end{Definition}

By \cite[Proposition 1.1]{BH} the betti numbers of $I$ and the homology groups of $D_\l$ are related by:
\begin{Theorem}\label{trans} For each $i$, and each $\l \in (\Gamma)$, 
$$\beta_{i,\l}^S(I(\Gamma)) = \dim_K \tilde H_i (D_\l,K).$$
\end{Theorem}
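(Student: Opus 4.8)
The statement is \cite[Proposition~1.1]{BH}, and the natural route is the standard computation of multigraded Betti numbers through a Koszul complex; here is how I would organize it. The first step is to trade $I(\Gamma)$ for $R = K[\Gamma]$. From the short exact sequence $0 \to I(\Gamma) \to S \to R \to 0$ of $\ZZ^n$-graded $S$-modules, the long exact sequence in $\Tor^S_\bullet(-,K)$, together with $\Tor^S_j(S,K) = 0$ for $j \ge 1$ and the fact that $\Tor^S_0(S,K)\to\Tor^S_0(R,K)$ is an isomorphism in every multidegree, gives $\Tor^S_i(I(\Gamma),K)_\l \cong \Tor^S_{i+1}(R,K)_\l$ for all $i \ge 0$ and all $\l$. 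So it suffices to prove $\Tor^S_{j}(R,K)_\l \cong \tilde H_{j-1}(D_\l,K)$ for all $j \ge 1$ and $\l \in (\Gamma)$.

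To compute $\Tor^S_\bullet(R,K)$ I would resolve $K$ over $S$ by the Koszul complex $K_\bullet = S\otimes_K\bigwedge^\bullet W$ on the variables $\{x_\alpha : \alpha\in\Gamma\}$, where $W = \bigoplus_\alpha K e_\alpha$ carries the $\ZZ^n$-grading $\deg e_\alpha = \alpha$ and the differential is $e_\alpha\mapsto x_\alpha$. Tensoring with $R$ over $S$ yields the Koszul complex on the images $t^\alpha\in R$ of the variables, so $\Tor^S_\bullet(R,K) = H_\bullet\!\big(R\otimes_K\bigwedge^\bullet W\big)$. In homological degree $j$ a $K$-basis of this complex is given by the symbols $t^\mu e_F$ with $\mu\in(\Gamma)$ and $F = \{\alpha_0<\cdots<\alpha_{j-1}\}\subseteq\Gamma$ (where $e_F = e_{\alpha_0}\wedge\cdots\wedge e_{\alpha_{j-1}}$); such a symbol has multidegree $\mu + \sigma_F$, with $\sigma_F = \sum_{\alpha\in F}\alpha$, and $\partial(t^\mu e_F) = \sum_{r=0}^{j-1}(-1)^r\, t^{\mu+\alpha_r}\, e_{F\setminus\{\alpha_r\}}$.

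Now fix $\l\in(\Gamma)$ and pass to the multidegree-$\l$ strand. A symbol $t^\mu e_F$ sits in multidegree $\l$ exactly when $\mu = \l - \sigma_F$; since the multidegree-$\nu$ piece of $K[t_1,\dots,t_n]$ is at most one-dimensional (spanned by $t^\nu$ when $\nu\in\NN^n$) and $t^\nu$ lies in $R$ precisely when $\nu\in(\Gamma)$, such a $\mu$ exists, and is then unique, if and only if $\l - \sigma_F\in(\Gamma)$, i.e.\ if and only if $F$ is a face of $D_\l$. Thus the multidegree-$\l$ strand of $R\otimes_K\bigwedge^\bullet W$ in homological degree $j$ has a basis indexed by the $(j-1)$-dimensional faces of $D_\l$, with the empty face (carrying the single symbol $t^\l e_\emptyset$) in degree $0$. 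Matching $F$ with the face it names, the Koszul differential $\partial(t^\mu e_F) = \sum_r (-1)^r t^{\mu+\alpha_r} e_{F\setminus\{\alpha_r\}}$ becomes exactly the augmented simplicial boundary operator of $D_\l$: the signs are the standard alternating-sum signs once $\Gamma$ is linearly ordered, and in homological degree $1$ the map $t^\mu e_{\{\alpha\}}\mapsto t^{\mu+\alpha}e_\emptyset$ is the augmentation $\tilde C_0(D_\l)\to\tilde C_{-1}(D_\l)$. Hence this strand is the augmented reduced chain complex of $D_\l$ with $\tilde C_{j-1}(D_\l)$ in homological degree $j$, so $\Tor^S_j(R,K)_\l \cong \tilde H_{j-1}(D_\l,K)$. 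Combined with the first step this gives $\beta^S_{i,\l}(I(\Gamma)) = \dim_K \tilde H_i(D_\l,K)$, as claimed.

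The work here is entirely bookkeeping: the only points to watch are the homological shift — which is exactly what forces the appearance of \emph{reduced} homology and of the empty face in the chain complex — and the compatibility of the Koszul signs with the simplicial orientation conventions, which is routine after fixing a linear order on $\Gamma$. I do not anticipate any genuine obstacle, and nothing beyond $\Gamma$ being a finite subset of $\NN^n$ is used, so the statement holds in the generality needed in the sequel.
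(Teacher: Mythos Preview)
The paper does not give its own proof of this statement; it simply cites \cite[Proposition~1.1]{BH}. Your proposal correctly identifies that citation and then supplies the standard Koszul-complex argument behind it, which is sound, so there is nothing further to compare.
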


For $a, b\in \NN^n$, we write $a \ll b$ to mean $a_i \le b_i$ for all $i = 1, ..., n$. Denote $\Delta_{\ll a} = \{z\in V(n,d): z\ll a\}$ and $\Gamma_{\ll a} = \{z\in \Gamma: z\ll a\}.$ Also, we write $\supp a = \{i: a_i \neq 0\}$.

\begin{Theorem}\label{quadratic} Assume that $d \neq 3$ and that $\a = (0, ..., 0, 1, d-1)$. The algebra $K[\Gamma]$ is quadratic.
\end{Theorem}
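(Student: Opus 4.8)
The plan is to use Theorem \ref{trans}: it suffices to show that $\tilde H_i(D_\l, K) = 0$ for all $i \ge 2$ and all $\l \in (\Gamma)$, since then $I(\Gamma)$ is generated in degrees corresponding to $\tilde H_1$, i.e. by quadrics (the multidegrees $\l$ with $\tilde H_1(D_\l) \neq 0$ all have $|\l| = 2$). Fix $\l \in (\Gamma)$ with $|\l| \ge 3$; I want to show $D_\l$ is connected through higher homology vanishes, i.e. that $D_\l$ has the homology of a point (or is at least acyclic above degree $0$). The first reduction is to compare $D_\l$ — the squarefree divisor complex for $\Gamma$ — with the analogous complex $D_\l^V$ built from the full Veronese $V(n,d)$. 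Since $K[V(n,d)]$ is Koszul, hence quadratic, $D^V_\mu$ has vanishing homology above degree $1$ for every $\mu$, and in degrees $|\mu| \ge 3$ it is acyclic. The complex $D_\l$ is the subcomplex of $D^V_\l$ obtained by deleting the single vertex $\a = (0,\dots,0,1,d-1)$ together with all faces containing it. So I must understand how deleting that one vertex changes the homology.

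The key step is a Mayer–Vietoris / deletion-link analysis. Write $D^V_\l = D_\l \cup \mathrm{star}(\a)$, with intersection $\mathrm{link}_{D^V_\l}(\a)$. The star of $\a$ is a cone, hence contractible, so the Mayer–Vietoris sequence gives $\tilde H_i(D^V_\l) \cong \tilde H_i(D_\l)$ for $i \ge 2$ provided $\mathrm{link}_{D^V_\l}(\a)$ is connected, and more generally relates $\tilde H_i(D_\l)$ to $\tilde H_{i-1}$ of that link and to $\tilde H_i(D^V_\l)$. Now $\mathrm{link}_{D^V_\l}(\a)$ is precisely $D^V_{\l - \a}$ (the squarefree divisor complex for the Veronese in multidegree $\l - \a$), which — again because $K[V(n,d)]$ is quadratic — is acyclic above degree $1$ whenever $|\l - \a| \ge 3$, i.e. $|\l| \ge 4$; and when $|\l - \a| = 2$ it still has the connectivity one needs. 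So for $|\l| \ge 4$ the Mayer–Vietoris sequence immediately gives $\tilde H_i(D_\l) = 0$ for $i \ge 2$, and one checks $\tilde H_1(D_\l) = 0$ too using that $D^V_\l$ is $1$-connected in that range. This reduces everything to the finitely many shapes of $\l$ with $|\l| = 3$.

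The remaining case $|\l| = 3$ is the heart of the argument and where I expect the real obstacle to lie: here $D^V_\l$ is acyclic but $D_\l$ is obtained by removing the vertex $\a$ and its (now possibly low-dimensional) star, so higher homology could in principle be created, and one must rule it out by hand. Concretely, $\l \in (\Gamma)$ with $|\l| = 3$ means $\l = u + v + w$ with $u,v,w \in \Gamma$; the faces of $D_\l$ are the subsets $F \subseteq \Gamma$ with $\l - \sum_{\alpha\in F}\alpha \in (\Gamma)$, and $\dim D_\l \le 2$. I would enumerate, up to symmetry, the possible supports and coordinate patterns of $\l$ relative to $\a = (0,\dots,0,1,d-1)$, using the hypothesis $d \neq 3$ crucially (this is exactly where the excluded value $d=3$, for which $y^3 - xz^2$ is a minimal cubic, must intervene — for $d=3$ one finds a $\l$ with $D_\l$ disconnected or with $\tilde H_1 \neq 0$ at degree $3$). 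For each pattern I would exhibit an explicit vertex $\alpha_0 \in D_\l$ that forms a cone point, or decompose $D_\l$ as a union of two cones glued along a connected subcomplex, forcing triviality of all reduced homology. The bookkeeping — which elements of $\Gamma$ are $\ll \l$, which pairs sum to something $\le \l$ still in $(\Gamma)$ — is where the computations in Theorem \ref{trans}'s language get delicate, and the notation $\Delta_{\ll a}$, $\Gamma_{\ll a}$, $\supp a$ introduced just before the statement is presumably tailored to organize exactly this case analysis.
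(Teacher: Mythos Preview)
Your reading of Theorem \ref{trans} is off by one, and this derails the whole plan. That theorem says $\beta_{i,\l}^S(I(\Gamma)) = \dim_K \tilde H_i(D_\l,K)$, so the \emph{generators} of $I(\Gamma)$ in multidegree $\l$ are counted by $\tilde H_0(D_\l,K)$, not by $\tilde H_1$. Proving $K[\Gamma]$ quadratic therefore means showing $D_\l$ is \emph{connected} for every $\l$ with $|\l|\ge 3$; the vanishing of $\tilde H_i$ for $i\ge 2$ that you target concerns third and higher syzygies and says nothing about generators. Your Mayer--Vietoris machinery is aimed at the wrong homological degree. Even if you retarget it to $\tilde H_0$, there is a second gap: the link of $\a$ in $D^V_\l$ is not $D^V_{\l-\a}$. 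Its faces are the subsets $G\subseteq V(n,d)\setminus\{\a\}$ with $\l-\a-\sum_{g\in G}g\in (V(n,d))$, i.e.\ it is the \emph{deletion} of the vertex $\a$ from $D^V_{\l-\a}$ whenever $2\a\ll\l$. So the induction you set up does not land back in full Veronese divisor complexes, and since $\Gamma$ is not $2$-full here the semigroups $(\Gamma)$ and $(V(n,d))$ genuinely differ, so you cannot freely pass between the two.

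The paper proceeds entirely differently and much more directly: it proves $D_\l$ is connected by fixing the lexicographically smallest vertex $y$ of $D_\l$ and, for an arbitrary vertex $x$, exhibiting an explicit edge-path from $x$ to $y$ inside $D_\l$. The argument is a case analysis, splitting first into $d=2$ versus $d\ge 4$ and then into subcases according to whether $\l_n\ge d$, $\l_n=d-1$, etc.; the hypothesis $d\neq 3$ is used precisely because the required intermediate vertices fail to lie in $\Gamma$ when $d=3$. No homological comparison with $D^V_\l$ is invoked beyond the trivial remark that if $\a\not\ll\l$ then $D_\l=D^V_\l$ is already connected.
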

\begin{proof} By Theorem \ref{trans}, it suffices to prove that for any $\l \in (\Gamma)$ with $|\l| = k \ge 3$, the divisor simplicial complex $D_\l$ is connected. By induction on $n$ we may assume that $\l$ has full support, i.e., $\supp \l = \{1, ..., n\}$. From the fact that the Veronese ring $K[V(n,d)]$ is quadratic, we deduce that if $\a \not \ll \l$ then $D_\l$ is connected. Thus we may further assume that $\a \ll \l$, in other words $\l_n \ge d-1$. Let $y$ be the smallest vertex of $D_\l$ in the lexicographic order. We will show that for any other vertex $x$ of $D_\l$, $x$ and $y$ are connected by a sequence of edges in $D_\l$. We will treat the cases $d = 2$ and $d \ge 4$ separatedly. We will make use of the following notation. For each $i$, $1 \le i \le n$, let $z_i = \max (x_i, y_i)$. Since $x,y \ll \l$, we have $z \ll \l$. Let $u = \l - z$. For each $v$ in $\Delta_{\ll u}$, we have $\l - (x + v)$ and $\l - (y + v)$ are in $V(n,(k-2)d)$. Since $\sum_{i=1}^n z_i  \le \sum_{i=1}^n (x_i + y_i) = 2d$, we have $\sum_{i=1}^n u_i \ge (k-2)d$.

\smallskip
\noindent
{\bf Case 1:} $d = 2$. The cases $n = 2$ and $n = 3$ are easily verified, we may assume that $n \ge 4$. Moreover, without loss of generality, we may assume that $\l_{n-1} \le \l_{n}$. Note that for any $j\ge 1$ the only elements of $V(n,2j)$ which are not in $(\Gamma)$ are of the form $(0, ..., 0, i, 2j-i)$ for some odd integer $i$. We have the following subcases: 

{\bf Case 1a:} $\l_n \ge 2$. In this case $y = (0, ..., 0, 2)$, as if $(0, ..., 0,2)$ is not a vertex of $D_\l$, then $\l = (0, ..., 0, 2) + (0, ..., 0, i, 2(k-1) -i) \notin (\Gamma)$, which is a contradiction. If $x_n + 2 \le \l_n$, i.e., $x + y \ll \l$, then $xy$ is an edge of $D_\l$, as if not, $\l - x = y + (0, ...,0, i, 2(k-2) - i) \notin (\Gamma)$, which is a contradiction. Thus we may assume that $\l_n = 2$ and $x_n = 1$. Since $x$ is a vertex of $D_\l$, 
$$\l = x + x^1 + \cdots + x^{k-1}$$
for some $x^i \in \Gamma$. Since $\l_n - x_n = 1$, there must exists $j$ so that $x^j_n = 0$. Thus $x^j y$ and $xx^j$ are edges of $D_\l$.

{\bf Case 1b:} $\l_n = 1$. In this case by our assumption $\l_{n-1} = 1$, and $y = (0, ...,0, 1, 0, 1)$. Since $x$ is a vertex of $D_\l$, 
$$\l = x + x^1 + \cdots + x^{k-1}$$
for some $x^i \in \Gamma$. Since $\l_n = 1 = x_n + \cdots + x^{k-1}_n$, there are $k-1$ elements $v^1, ..., v^{k-1}$ among the elements $x, x^1, ..., x^{k-1}$ such that $v^i_n = 0$ for all $i$. Since $\l_{n-2} = x_{n-2} + x^1_{n-2} + \cdots+x^{k-1}_{n-2} \ge 1$, there exists an $x^i$ such that $x^i_{n-2} \ge 1$. In particular, there exists at least one element $v$ among the elements $v^1, ..., v^{k-1}$ such that $v_{n-2} + 1\le \l_{n-2}$. This element $v$ is connected to $x$. Since $\l_n -y_n = 0$ and $\l - v-y\in V(n,(k-2)d)$, $yv$ is an edge of $D_\l$.

\smallskip 
\noindent 
{\bf Case 2:} $d \ge 4$. Note that for any $j\ge 1$ the only element of $V(n,jd)$ which is not in $(\Gamma)$ is $(0, ..., 0, 1, jd-1)$. There are following subcases:

{\bf Case 2a:} $\l_n \ge d$. In this case $y = (0, ..., 0, d)$ as if $(0, ..., 0, d)$ is not a vertex of $D_\l$, then $\l = y + (0, ..., 0, 1, (k-1)d-1) = (0, ..., 0, 1, kd-1)\notin (\Gamma).$ If $x_n + d \le \l_n$, i.e. $x + y \ll \l$, then $xy$ is an edge of $D_\l$, as if $xy$ is not an edge of $D_\l$, then 
$$\l - x = y + (0, ..., 0, 1, (k-2)d-1) = (0, ..., 0, 1, (k-1)d-1)\notin (\Gamma),$$
which implies that $x$ is not a vertex of $D_\l$, which is a contradiction. Thus we may assume that $\l_n < x_n +d$. In particular, $\l_n -d \le d-2$ and $x_n \ge 1$. Therefore $\a \not \ll u$, and $\sum_{i=1}^n u_i \ge (k-2)d + 1 \ge d+1$. 

For any $v \in \Delta_{\ll u}$, $vy$ is an edge, as $\l_n - y_n \le d-2$, and $\l - v - y\in V(n,(k-2)d)$. 

For any $v\in \Delta_{\ll u}$, $vx$ is not an edge of $D_\l$ if and only if $\l - x - v = (0, ..., 0, 1, (k-2)d-1)$. In other words, there exists at most one element $v \in \Gamma_{\ll u}$ such that $xv$ is not an edge of $D_\l$. Thus we may assume that $\Delta_{\ll u} = \{v\}$ and that $xv$ is not an edge of $D_\l$. Since $\sum_{i=1}^n u_i \ge d+1$, the set $\Delta_{\ll u}$ has unique element if and only if $\supp u =\{i\}$ for some $i$. Since
$$\l = x + v + (0, ..., 0, 1, (k-2)d-1)$$
and $\l_n \le 2d-2$, $k  = 3$. Therefore $\l_n = \max(x_n,y_n) = d$, and $x_n = 1$. Also, $\l_{n-1} = x_{n-1} + u_{n-1} = x_{n-1} + v_{n-1} + 1$, thus $u_{n-1} \ge 1$. Therefore, $\supp u =\{n-1\}$. In particular 
$$\l = x + (0, ..., 0, d+1, d-1),$$
and so $x$ is connected to $(0, ..., d-2, 2)$. Replacing $x$ by this element, and repeating the argument above, now $x_n = 2$, one see that there exists an element $v$ such that $x$ is connected to $v$ and $v$ is connected to $y$.

{\bf Case 2b:} $\l_n = d-1$ and $\l_{n-1} > 1$. In this case $y = (0, ..., 0, 2, d-2)$, as this is the smallest element in $\Gamma_{\ll \l}$ and $\l -y\in (\Gamma)$. For any element $v\in \Gamma$, such that $y + v \ll \l$, we have $yv$ is an edge of $D_\l$, since $\l_n - y_n = 1 < d-1$. Thus we may assume that $x + y \not \ll \l$. This implies that $\sum_{i=1}^n u_i \ge d+1$. As in case 2a, if $\Delta_{\ll u}$ has more than one element, then there exists an element $v \ll u$ so that $xv$ and $vy$ are edges of $D_\l$. Thus we may assume that $\Delta_{\ll u}$ has only one element, in particular $\supp u = \{i\}$ for some $i$. Since $u_n\le 1$, we have $i \le n-1$. In particular $\l_n = \max(x_n,y_n) = x_{n-1}= d-1$. Thus $xv$ is an edge of $D_\l$ where $v$ is the unique element in $\Delta_{\ll u}$, since $\l_n-x_n = 0$. 

{\bf Case 2c:} $\l_n = d-1$ and $\l_n = 0$. In this case, $y = (0, ..., 0, 1, 0, d-1)$, as this is the smallest element in $\Gamma_{\ll \l}$ and $\l - y\in (\Gamma)$. As in case 2b, any element $v\in \Gamma$ such that $y + v \ll \l$ is connected to $y$ by an edge. Thus we may assume that $x + y \not \ll \l$, which implies that $\sum_{i=1}^n u_i \ge d+1$. Also, we may assume that $\supp u = \{i\}$ for some element $i$. Since $u_{n-1}, u_n \le 1$, we have $i \le n-2$. Thus $x_{n-1} = \l_{n-1} = 1$. Therefore $xv$ is an edge of $D_\l$ where $v$ is the unique element in $\Delta_{\ll u}$, since $\l_{n-1} - x_{n-1} = 0$.
\end{proof}

We will prove that $K[\Gamma]$ has a quadratic Gr\"obner basis. We refer to \cite{S} for unexplained terminology about Gr\"obner basis. By abuse of notation, each element of $\Gamma$ also denotes a variable in $S = K[x_\lambda:\lambda \in \Gamma]$. Each monomial $x^1x^2\cdots x^k$ in $S$ with $x^1 \le x^2 \le ... \le x^k$ corresponds to a chain from $0$ to $x^1 + \cdots + x^k$ whose links are $x^1, ..., x^k$. Recall from the introduction that the set of chains $P(a,b)$ with fixed endpoints $a,b$ is totally ordered. We say that a chain $x^1\cdots x^k$ is minimal if it is the minimal chain in $P(0, x^1 + \cdots + x^k)$. For a monomial $x^1 \cdots x^k$, which we think of as a chain, we write $\min(x^1\cdots x^k)$ for the monomial $y^1\cdots y^k$ such that $y^1\cdots y^k$ is the minimal chain in $P(0,x^1 + \cdots + x^k)$.

The following simple observations where $\a$ is an arbitrary element of $V(n,d)$ will be useful when dealing with minimal chains and will be used in the proof of the next lemma. These will also be useful in later section.

\begin{Fact}\label{support} Let $m\in \NN^n$ such that $\Gamma_{\ll m}$ is non-empty. Let $x$ be the smallest element in $\Gamma_{\ll m}$. Let $i$ be the index such that $\sum_{j > i} m_j < d$ while $\sum_{j\ge i} m_j \ge d$. The smallest element in $\Delta_{\ll m}$ is $s = (0, ..., 0, s_i, m_{i+1}, ..., m_n)$, where $s_i = d - (m_{i+1} + \cdots + m_n)$. If $s\neq \a$, then $x = s$. If $s = \a$ and $m_i > s_i$, then $x = (0, ..., 0, s_i+1, m_{i+1}-1, m_{i+2}, ..., m_n)$. Finally, if $s = \a$, and $m_i = s_i$, then $x= (0, ..., 0,1,0, ...0, s_i-1, m_{i+1}, ..., m_n)$ where $1$ is at position $j$, which is the largest index less than $i$ such that $m_j > 0$. 
\end{Fact}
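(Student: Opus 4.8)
The statement is a purely combinatorial assertion about lexicographic minima, so I would prove it by directly determining which vectors $z \in \NN^n$ with $z \ll m$ and $\sum_j z_j = d$ are lex-small. The organizing principle throughout is that lex-minimality forces the coordinate weight of $z$ to be pushed as far to the right as the upper bounds $z_j \le m_j$ allow.

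\emph{Step 1: the minimum of $\Delta_{\ll m}$.} The two defining inequalities for $i$, namely $\sum_{j>i} m_j < d \le \sum_{j\ge i} m_j$, say exactly that $0 < s_i \le m_i$, so $s = (0,\dots,0,s_i,m_{i+1},\dots,m_n)$ lies in $\Delta_{\ll m}$. For minimality, given any other $z \in \Delta_{\ll m}$ I would compare it with $s$ at the first coordinate where they disagree: if $z_k > 0$ for some $k < i$, then $z_k > 0 = s_k$, so $z > s$; otherwise $z$ and $s$ agree on the leading block of zeros, whence $z_i = d - \sum_{j>i} z_j \ge d - \sum_{j>i} m_j = s_i$, with equality forcing $z_j = m_j = s_j$ for all $j > i$ (equal totals under the caps), i.e. $z = s$. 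Hence $s$ is the unique minimum of $\Delta_{\ll m}$.

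\emph{Step 2: passing to $\Gamma_{\ll m} = \Delta_{\ll m}\setminus\{\a\}$.} If $s \ne \a$, then $s$ remains the minimum, so $x = s$. If $s = \a$, then $x$ is the second-smallest element of $\Delta_{\ll m}$, which exists because $\Gamma_{\ll m}\ne\emptyset$. To pin it down I would split $\Delta_{\ll m}\setminus\{s\}$ into the vectors $z$ whose first $i-1$ coordinates all vanish (``inner'' vectors) and those having a positive coordinate in some position $<i$ (``outer'' vectors). By Step 1, an inner vector has $z_i \ge s_i+1$; such a vector exists precisely when $m_i > s_i$ and there is room in the tail (the only obstruction being the degenerate case $s_i = d$), and the lex-least among them sets $z_i = s_i+1$ and pushes the remaining $d-s_i-1$ units as far right as the caps $m_{i+1},\dots,m_n$ permit, which for $m_{i+1}>0$ is exactly $(0,\dots,0,s_i+1,m_{i+1}-1,m_{i+2},\dots,m_n)$. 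An outer vector is lex-least when it carries a single $1$ at position $j_0$, the largest index $<i$ with $m_{j_0}>0$, and $s_i-1$ at coordinate $i$, with the rest pushed right, namely $(0,\dots,0,1,0,\dots,0,s_i-1,m_{i+1},\dots,m_n)$. Since any inner vector has $0$ in position $j_0$ whereas any outer vector has a positive entry there, an inner vector is always lex-smaller. Thus, when $m_i>s_i$ (and a first-type vector exists) the minimum of $\Delta_{\ll m}\setminus\{s\}$ is the inner vector, and when $m_i = s_i$ there is no inner vector so the minimum is the outer vector; in the latter case one checks that $j_0$ is defined and that $m_1,\dots,m_{i-1}$ are not all zero, since otherwise $\Delta_{\ll m} = \{s\} = \{\a\}$ and $\Gamma_{\ll m}$ would be empty, against the hypothesis.

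The proof is elementary; the only real work is converting the ``push weight right'' heuristic into the inequality chains above and verifying, under the standing assumption $\Gamma_{\ll m}\ne\emptyset$, that in each configuration of zeros among the $m_j$ the relevant ``next'' vector exists and takes the claimed shape — in particular locating the single unit of deficit correctly when $m_{i+1}=0$. I expect this case bookkeeping, rather than any conceptual point, to be the main obstacle.
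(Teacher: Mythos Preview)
The paper records this as a Fact without proof, treating it as a routine observation about lexicographic minima. Your argument is correct and is precisely the intended justification: push the weight of $z$ as far right as the caps $m_j$ permit to identify $s$, then find the second-smallest element of $\Delta_{\ll m}$ via your inner/outer split. Two minor remarks. First, not every outer vector literally has a positive entry at position $j_0$ (it might instead be positive only at some $k<j_0$), but your conclusion that every inner vector is lex-smaller than every outer vector is still correct, since at the first coordinate where they differ the inner vector is zero and the outer vector is positive. Second, under the paper's standing assumption $a_1\le\cdots\le a_n$, the case $s=\a$ forces $m_{i+1}=a_{i+1}\ge a_i=s_i>0$, so the edge case $m_{i+1}=0$ you flag at the end does not arise and the deficit always lands at position $i+1$.
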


\begin{Fact}\label{nonminimalx1} Let $x^1x^2$ be a minimal chain of $\a$-degree $0$. Let $y^1$ be the minimal element in the set $\Gamma_{\ll x^1+ x^2}$. Let $y^2 = x^1 + x^2 - y^1$. The only obstruction for the chain $y^1y^2$ not smaller than $x^1x^2$ is that $y^2 = \a$. In particular, if $x^1 > y^1$, then $x^1$ is next to minimal in the set $\Gamma_{\ll x^1+x^2}$ and $x^2 = \a + y^1-x^1$.
\end{Fact}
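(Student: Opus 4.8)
My plan is to analyze the structure of the minimal chain in $P(0, x^1+x^2)$ directly using Fact \ref{support} with $m = x^1 + x^2$. Write $m = x^1 + x^2$ and let $y^1$ be the smallest element of $\Gamma_{\ll m}$ and $y^2 = m - y^1$. The chain $y^1 y^2$ is the minimal chain in $P(0,m)$ precisely when $y^1 \le y^2$ in lex order and $y^2 \in \Gamma$ (so that $y^1y^2$ is an honest chain with links in $\Gamma$, with the smallest possible first link). First I would observe that $y^1 \le y^2$ is automatic: since $x^1 x^2$ is a chain we have $x^1 \le x^2$, so $x^1$ lies in $\Gamma_{\ll m}$, hence $y^1 \le x^1 \le x^2$; and $y^1 \le x^1$ together with $y^1 + y^2 = x^1 + x^2$ forces $y^2 \gg$-dominates nothing directly, so I must argue lexicographically — if $y^1 = x^1$ then $y^2 = x^2 \ge x^1 = y^1$, and if $y^1 < x^1$ then the first coordinate where they differ has $(y^1)_k < (x^1)_k$ for the smallest such $k$... here I would instead use the explicit description in Fact \ref{support}: the minimal element $s$ of $\Delta_{\ll m}$ is concentrated at the rightmost coordinates, and $y^1$ is either $s$ or one of the two explicit perturbations of $s$, all of which are visibly $\le$ (the complementary element) in lex order since they put all their mass as far right as possible. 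So the only possible obstruction to $y^1y^2$ being a chain at all is that $y^2 \notin \Gamma$, i.e.\ $y^2 = \a$. This establishes the first sentence.

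For the second sentence, assume $x^1 > y^1$. Since $x^1 x^2$ is by hypothesis the \emph{minimal} chain in $P(0,m)$ and $x^1 \neq y^1 = \min \Gamma_{\ll m}$, the chain $y^1 y^2$ must fail to be a legal chain in $P(0,m)$, which by the previous paragraph means $y^2 = \a$. Now I need to pin down $x^1$ and $x^2$. The key point is that among chains in $P(0,m)$ with links in $\Gamma$, we are looking for the one with smallest first link; since the first link $y^1$ is forbidden (its complement is $\a \notin \Gamma$), the minimal chain must take the next candidate for the first link. I would argue that the relevant "next candidate" is exactly the next-to-smallest element of $\Gamma_{\ll m}$: if $x^1$ were the second-smallest element $w$ of $\Gamma_{\ll m}$ then $x^2 = m - w = (m - y^1) + (y^1 - w) = \a + y^1 - w$, which is an element of $V(n,d)$; I then need that $\a + y^1 - w \in \NN^n$ and $\neq \a$ — the former because $w \ll m$ and one checks coordinatewise using $y^1 = \min \Gamma_{\ll m}$, the latter because $w \neq y^1$ — so $x^2 = \a + y^1 - x^1 \in \Gamma$ and $x^1 x^2$ is legal, and any chain with first link strictly between $y^1$ and $w$ in lex order either has its first link not in $\Gamma_{\ll m}$ or contradicts $w$ being second-smallest. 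Hence $x^1 = w$ is next-to-minimal in $\Gamma_{\ll m}$ and $x^2 = \a + y^1 - x^1$, as claimed.

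The main obstacle I anticipate is the bookkeeping in showing $x^1$ is \emph{exactly} the second-smallest element of $\Gamma_{\ll m}$ rather than something smaller that merely fails to extend to a legal chain: I have to rule out first links $x^1$ with $y^1 < x^1 < w$ (lex) by showing every such $x^1$ either lies outside $\Gamma_{\ll m}$ (so can't be a link at all) or, if it lies in $\Gamma_{\ll m}$, contradicts minimality of $w$; and symmetrically I must confirm the complement $\a + y^1 - w$ really is a nonnegative lattice point, which amounts to checking $w_j \le \a_j + y^1_j$ in each coordinate $j$. Both of these are most cleanly handled by invoking the explicit formulas for $\min \Gamma_{\ll m}$ and the structure of $\Delta_{\ll m}$ from Fact \ref{support}, where $y^1$ and its near-neighbors differ only in the two rightmost occupied coordinates; the coordinatewise inequality $w \ll \a + y^1$ is then immediate there since $\a$ and $y^1$ agree with $m$ on all coordinates to the right of the "active" index. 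Everything else is routine.
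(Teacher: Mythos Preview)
The paper states this as one of two ``simple observations'' and gives no proof at all, so there is no argument in the paper to compare yours against. Your plan is correct in outline and would go through, but you are making it harder than it needs to be in two places.

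For the first sentence you do not need the explicit formulas of Fact~\ref{support} to see that $y^1 \le y^2$. Simply note that $y^2 = m - y^1$ lies in $\NN^n$ (because $y^1 \ll m$) and has $|y^2| = d$, so $y^2 \in V(n,d)$; if in addition $y^2 \neq \a$ then $y^2 \in \Gamma$ and $y^2 \ll m$, so $y^2 \in \Gamma_{\ll m}$, whence $y^2 \ge y^1$ by the very minimality of $y^1$. That disposes of the ordering issue in one line and shows immediately that the only way $y^1y^2$ can fail to be a chain of $\a$-degree $0$ smaller than or equal to $x^1x^2$ is $y^2 = \a$.

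For the second sentence your anticipated ``main obstacle'' is not an obstacle. The quantity $\a + y^1 - w$ is literally $m - w$, and $w \ll m$ by assumption, so nonnegativity is automatic and no appeal to Fact~\ref{support} is needed. The argument that $x^1$ is second smallest is then direct: if some $w \in \Gamma_{\ll m}$ satisfied $y^1 < w < x^1$, then $m - w \in V(n,d)$, $m - w \ll m$, and $m - w \neq \a$ (since $w \neq y^1$), so $m - w \in \Gamma_{\ll m}$. The chain $\{w, m-w\}$ has $\a$-degree $0$ and its smaller link is $\min(w, m-w) \le w < x^1$, so it is strictly smaller than $x^1x^2$, contradicting minimality. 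Hence no such $w$ exists, $x^1$ is next to minimal, and $x^2 = m - x^1 = \a + y^1 - x^1$.
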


\begin{Lemma}\label{Lm1} Assume that $\a = (0, ..., 0, 1, d-1)$ and $d \neq 3$. Let $x,y, z$ be elements in $\Gamma$. If $xy$, $yz$ and $xz$ are minimal, then $xyz$ is a minimal chain.
\end{Lemma}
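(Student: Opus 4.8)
The plan is to prove the contrapositive-flavored statement directly: assuming $xy$, $yz$, $xz$ are all minimal chains (in $P(0,x+y)$, $P(0,y+z)$, $P(0,x+z)$ respectively), show that no rewriting of the chain $xyz$ produces a lexicographically smaller (and $\a$-degree at most equal) chain in $P(0,x+y+z)$. Since all three two-element subchains have $\a$-degree $0$ and are minimal, in particular $x,y,z$ are not equal to $\a$ unless forced, and each pair sums to something whose minimal chain is itself. The key tool is Fact~\ref{support} and Fact~\ref{nonminimalx1}: if $xyz$ (ordered so $x \le y \le z$) were not minimal, then the minimal chain $w^1 w^2 w^3$ in $P(0, x+y+z)$ differs from it, and I want to localize the "defect" to one of the pairs. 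First I would reduce to the case $\supp(x+y+z) = \{1,\dots,n\}$ by the same support-induction used in Theorem~\ref{quadratic}, and then to the case $\a \ll x+y+z$, i.e. $z_n + $ (contributions) are large enough that $\a$ is a potential link; otherwise the Veronese ring's quadratic Gröbner basis property (from \cite{S}) immediately gives minimality of $xyz$ since no link can be $\a$.

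**Next** I would analyze $w^1$, the smallest element of $\Gamma_{\ll x+y+z}$, via Fact~\ref{support}. The heart of the argument is: because $xy$ is minimal, $x$ is already quite small — it is either the smallest element of $\Gamma_{\ll x+y}$ or, by Fact~\ref{nonminimalx1}, the next-to-smallest with $y = \a + (\text{smallest}) - x$. I would show that $w^1 \le x$ always, and that if $w^1 < x$ then the "missing mass" $x - w^1$ can only be absorbed into $y$ or $z$ in a way that contradicts the minimality of $xy$ or $xz$. Concretely: set $w^1$ = smallest vertex of $\Gamma_{\ll x+y+z}$; then $w^1 \ll x+y+z$ and I compare $w^1$ with the smallest vertex of $\Gamma_{\ll x+y}$ and of $\Gamma_{\ll x+z}$. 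The case distinctions will mirror the $d=2$ versus $d\ge 4$ split of Theorem~\ref{quadratic} (recall $d\ne 3$), with the sporadic configurations near $\a=(0,\dots,0,1,d-1)$ — where the smallest vertex gets "bumped up" because $s = \a$ — being the delicate ones. The obstruction "$w^2 = \a$" or "$w^3 = \a$" flagged in Fact~\ref{nonminimalx1} is exactly what must be ruled out, and it gets ruled out precisely because $xy$ (or $yz$, $xz$) already has $\a$-degree $0$ and is minimal, so the relevant sum does not force an $\a$.

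**The main obstacle** I expect is the bookkeeping in the case where $w^1 < x$: here one must produce an explicit edge path, replacing $x$ by $w^1$ and tracking where the coordinates $x-w^1$ go, while simultaneously re-sorting the chain and verifying that the new second and third links are still honest elements of $\Gamma$ (not $\a$) and that the resulting chain genuinely is smaller — or, alternatively, deriving a contradiction with one of the three minimality hypotheses. The triple hypothesis (all of $xy$, $yz$, $xz$ minimal, not just one) is what gives enough leverage: any attempted reduction of $xyz$ touches at most two of the three links, hence lands inside one of the three pairs, where minimality forbids it. I would organize the proof as: (i) support and $\a\ll\l$ reductions; (ii) the generic case via the Veronese Gröbner basis and Fact~\ref{support}, showing $w^1 = x$ and then inductively $w^2 w^3 = \min(yz) = yz$; (iii) the finitely many sporadic cases where $s=\a$ in Fact~\ref{support}, handled by hand using $d\ne 3$ and the explicit form $\a = (0,\dots,0,1,d-1)$. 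Step (iii) is where essentially all the real work sits, but each sub-case should collapse quickly once the candidate minimal chain $w^1w^2w^3$ is written down explicitly and compared coordinatewise with $xyz$.
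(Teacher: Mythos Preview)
Your plan --- reduce to showing that $x$ is the smallest element $w^1$ of $\Gamma_{\ll x+y+z}$ and then conclude via the minimality of $yz$ --- is exactly the paper's approach; the paper simply organizes the case analysis around the value of $(x+y)_n$ (the cases $u_n\ge d$, $u_n\le d-2$, $u_n=d-1$) rather than first performing your support and $\a\ll m$ reductions, but the substance is identical. One caution: your heuristic that ``any attempted reduction of $xyz$ touches at most two of the three links'' is not literally true (a rewrite $xyz\to w^1w^2w^3$ can change all three simultaneously), so do not use it as an argument --- fortunately your formal steps (i)--(iii) do not depend on it.
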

\begin{proof} The case $d = 2$ is easily verified, we assume that $d \ge 4$. Let $m = x + y + z$. It suffices to show that $x$ is the smallest element in $\Gamma_{\ll m}$. Let $u = x + y$ and $v = x + z$. If $u_n \ge d$, then by Fact \ref{support} and Fact \ref{nonminimalx1}, and $u \neq (0, ..., 1, 2d-1)$, we have $x = (0, ..., 0, d)$, which is the smallest element in $\Gamma$. Similarly, if $v_n \ge d$, $x$ is the smallest element in $\Gamma$. Thus we may assume that $u_n, v_n \le d-1$. 

If $u_n \le d-2$, by Fact \ref{support} and Fact \ref{nonminimalx1}, we have $x = (0, ...,0, s_i, u_{i+1}, ..., u_n)$ where $i$ is the index such that $u_{i+1} + \cdots + u_n < d$, while $u_i + \cdots+ u_n \ge d$ and $s_i = d - (u_{i+1} + \cdots + u_n).$ Since $u_n \le n-2$, we have $i \le n-1$. In particular $y_{i+1} = y_{i+2} = \cdots = y_n = 0$. Together with Fact \ref{support}, Fact \ref{nonminimalx1} and the fact that $yz$ is minimal, we have $z_{i+1} = \cdots = z_n = 0$. In particular $x$ is the smallest element in $\Gamma_{\ll m}$. 

Thus we may assume that $u_n = d-1$. This implies that $v_n = d-1$. If $u_{n-1} \ge 2$, by Fact \ref{support} and Fact \ref{nonminimalx1}, we have $x = (0, ...,0, 2, d-2)$. This implies that $y_n = z_n = 1$. By Fact \ref{support} and Fact \ref{nonminimalx1}, $yz$ is not minimal, which is a contradiction.

Finally, assume that $u_n = d-1$ and $u_{n-1} = 1$. By Fact \ref{support} and Fact \ref{nonminimalx1}, we have $x = (0, ..., 0, 1, 0, ..., 0, d-1)$, where $1$ is in position $j$, the largest index less than $n-1$ such that $u_j > 0$. Moreover, in this case $y_l = 0$ for all $j < l\le n-2$, and $y_n = 0$. Since $xz$ is minimal, we have $z_{n-1} \le 1$, $z_l = 0$ for all $j \le n \le n-2$ and $z_n = 0$. Since $yz$ is minimal, $z_{n-1} = 0$. In particular, $x$ is also the minimal element in $\Gamma_{\ll m}$.
\end{proof} 

Let $\prec$ be the grevlex order on $S = K[x_\lambda: \lambda \in \Gamma]$. Let $xy$ with $x \le y$ be a non-minimal quadratic binomial. Assume that $\min (xy) = zt$. Since $z < x$, and $z + t = x + y$, $z < x \le y < t$. In particular $\min(xy) \prec xy$.

\begin{Theorem}\label{Koszul} Assume that $d \neq 3$ and $\a = (0, ..., 0, 1, d-1)$. With respect to the grevlex order $\prec$ on $S = K[x_\lambda: \lambda \in \Gamma]$, the set of quadratic binomials 
$$\mathcal G = \{x y - zt: xy \text{ is not minimal and } \min(xy) = zt \}$$
is a Gr\"obner basis for $I(\Gamma)$. As a consequence, the algebra $K[\Gamma]$ is Koszul.
\end{Theorem}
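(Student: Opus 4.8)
The plan is to verify Buchberger's criterion for the set $\mathcal{G}$: every S-polynomial of a pair of elements of $\mathcal{G}$ reduces to zero modulo $\mathcal{G}$. Since all elements of $\mathcal{G}$ are binomials, it is convenient to think combinatorially in terms of chains. As noted just before the statement, if $xy$ is a non-minimal quadratic chain with $\min(xy) = zt$, then $z < x \le y < t$, so that $\min(xy) \prec xy$; hence the leading term of $xy - zt$ is $xy$ (as a monomial in $S$), and $\mathcal{G}$ generates the toric ideal $I(\Gamma)$ because the Veronese ideal is quadratic together with Theorem \ref{quadratic} — actually one should first argue that $\mathcal{G}$ generates $I(\Gamma)$, which follows since $K[\Gamma]$ is quadratic (Theorem \ref{quadratic}) and every quadratic binomial in $I(\Gamma)$ is a difference of two quadratic chains with the same endpoints, which can be connected through $\min$.

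First I would reduce the S-pair analysis to the case of two binomials $x^1x^2 - z^1z^2$ and $y^1y^2 - w^1w^2$ whose leading terms $x^1x^2$ and $y^1y^2$ share a variable, say $x^2 = y^1 = v$; the S-polynomial is then (up to sign) $z^1z^2 y^2 - w^1w^2 x^1$, a relation among the two cubic chains $z^1z^2y^2$ and $x^1w^1w^2$ having common endpoints. The key point is to show such a cubic relation reduces to zero. Here is where Lemma \ref{Lm1} does the real work: if a cubic chain $abc$ has all three of its sub-chains $ab$, $bc$, $ac$ minimal, then $abc$ itself is minimal. Consequently, starting from any cubic chain, one can repeatedly rewrite non-minimal quadratic sub-chains using elements of $\mathcal{G}$ — each such rewriting strictly decreases the monomial in the grevlex order — and the process terminates exactly at the unique minimal cubic chain with the given endpoints. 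Since both $z^1z^2y^2$ and $x^1w^1w^2$ have the same endpoints $0$ and $\lambda := x^1 + x^2 + y^2$, they reduce to the same minimal cubic chain, so their difference lies in the ideal generated by $\mathcal{G}$ and reduces to zero. I would spell this out as: the normal form of any cubic monomial modulo $\mathcal{G}$ is its minimal chain, using Lemma \ref{Lm1} to guarantee that a monomial with no non-minimal quadratic sub-chain is already minimal, hence already in normal form.

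The second half of the statement — that $K[\Gamma]$ is Koszul — is then immediate: an algebra defined by an ideal admitting a quadratic Gröbner basis is Koszul (the initial ideal is a quadratic monomial ideal, hence Koszul, and Koszulness is preserved under Gröbner degeneration, by a standard result of Fröberg / see \cite{S}).

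The main obstacle I anticipate is the bookkeeping in the S-pair reduction, specifically making sure that after rewriting one non-minimal quadratic sub-chain of a cubic chain the other sub-chains do not interfere, i.e. that Lemma \ref{Lm1}'s hypothesis can always be arranged or that the rewriting still terminates at the minimal chain. The cleanest route is to avoid tracking individual reduction sequences and instead prove directly the clean statement: \emph{the set of minimal cubic chains is exactly the set of standard monomials of degree $3$, equivalently, a cubic chain is minimal if and only if it contains no non-minimal quadratic sub-chain} — the nontrivial direction being precisely Lemma \ref{Lm1}. Combined with the degree-two case (every non-minimal quadratic chain is a leading term of an element of $\mathcal{G}$), Buchberger's criterion in the form "$\mathcal{G}$ is a Gröbner basis iff the standard monomials span the quotient" — or more precisely the criterion that it suffices to check S-pairs where the overlap has the two leading terms sharing exactly one variable — then closes the argument. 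I would also double-check the edge case $d = 2$, where $\a = (0,\dots,0,1,1)$ and $\Gamma$ is small, to confirm no degenerate behavior, though this is routine.
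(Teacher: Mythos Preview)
Your proposal is correct and follows essentially the same route as the paper: invoke Theorem~\ref{quadratic} to see that $\mathcal{G}$ generates $I(\Gamma)$, then check Buchberger's criterion by showing that every cubic monomial reduces via $\mathcal{G}$ to the minimal chain with the same endpoints, using Lemma~\ref{Lm1} for the termination step (a cubic all of whose quadratic submonomials are minimal is itself minimal). The paper's proof is exactly this, stated in about four sentences.
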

\begin{proof}By Theorem \ref{quadratic}, $\mathcal G$ is a minimal generating system for $I(\Gamma)$. To show that $\mathcal G$ is a Gr\"obner basis of $I$, it suffices by Buchberger's criterion to show that any cubic monomial $xyz$ reduces to $\min(xyz)$ by $\mathcal G$. 

By the choice of term order $\prec$, if any of the monomials $xy, yz, xz$ is not minimal we replace it by its minimal, then we get a smaller monomial in the equivalent class of $xyz$ modulo $\mathcal G$. Since the number of monomials in this equivalent class is finite, this procedure stops. When it stops, one gets $xy, yz$ and $xz$ are minimal. By Lemma \ref{Lm1}, $xyz$ is minimal.
\end{proof}

\begin{Remark} In general, Lemma \ref{Lm1} does not hold if $\a \neq (0, ..., 1, d-1)$. For example, consider the case of the classical pinched Veronese where $n=d=3$ and $\a = (1,1,1)$. Let $x = (0,1,2)$, $y = (1,0,2)$ and $z = (3,0,0)$. The chains $xy, xz$ and $yz$ are minimal but $xyz$ is not. As $x+ y+z = u+v+w$, with $u = (0,0,3)$, $v = (2,0,1)$ and $w = (2,1,0)$ and $uvw < xyz$. 

Theorem \ref{Koszul} still holds if $\Gamma$ is replaced by $V(n,d)$. In other words, it gives a new quadratic Gr\"obner basis of the Veronese ring $K[V(n,d)]$ (see \cite[Theorem 14.2]{S} for the classcial quadratic Gr\"obner basis for $K[V(n,d)]$).
\end{Remark}

\section{Proof of the main theorem in the case $\Gamma$ is $2$-full}{\label{pfmain}

In this section, we will give a proof of the main theorem in the case $\Gamma$ is $2$-full. Throughout this section, we may assume that $\a$ is different from $(0, ..., 0, d)$, $(0,..., 0, 1, d-1)$ and $(0, ..., 0,2, d-2)$.  In particular, $\supp \a \supseteq \{n-1, n\}$.

Let us first recall some notation from the introduction. For each $\l$ in the semigroup $(\Gamma)$, $\Gamma_\l$ is the simplicial complex of open chains whose links are in $\Gamma$, while $\Delta_\l$ is the simplicial complex of open chains whose links are in $V(n,d)$. To show that $K[\Gamma]$ is Koszul, we will show that for each $\l$, with $|\l| \ge 7$, the simplicial complex $\Gamma_\lambda$ has no homology in dimensions $\le |\l| - 8$. To accomplish this, we will show in Lemma \ref{dim} that for each chain $p$ in $\Delta_\lambda$ of degree at least $1$, the simplicial complex $F_{<p} \cap p$ has dimension equal to $\dim \Delta_\l - 1 = |\l| - 3$. By analyzing the facet structure of $F_{<p} \cap p$, we will show that it has no homology in dimensions $\le |\l| - 7$. 

The following property and notation will be used frequently in the proofs of the following lemmas. For an integer $n$, $[n]$ denote the set of elements $\{1, ..., n\}$. By the $2$-fullness, if $a \ll b$ are elements in the semigroup generated by $V(n,d)$ and $|b| - |a| \ge 2$, then there is a closed chain from $a$ to $b$ whose links are in $\Gamma$.

Fix an open chain $p$ in $\Delta_\l$ which is not in $\Gamma_\l$. The corresponding closed chain from $0$ to $\lambda$ is denoted by $\bar p$. We label the nodes of $p$ by $1, ..., n$. The label $0$ stands for the origin $0$, and $n+1$ stands for $\l$. For any consecutive set of indices $L = \{i, ..., i+j\}$, denote $\bar L$ the subchain of the closed chain $\bar p$ going from node $i-1$ to node $i+j+1$. A chain from node $i$ to node $j$ is either denoted by its nodes $(i)(i+1)\cdots (j)$ or by its links $x^1 \cdots x^l$. Note that $\Delta_\l$ and $\Gamma_\l$ are both pure simplicial complex of dimension $n-1$, and $[n]$ is a facet of $\Delta_\l$. 

The proof of the main theorem in the case $\Gamma$ is $2$-full relies on the following series of lemmas. 

\begin{Lemma}\label{dim} $\dim F_{<p} \cap p = \dim \Delta_\l - 1$. 
\end{Lemma}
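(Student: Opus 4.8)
The plan is to show that $F_{<p}\cap p$ is a subcomplex of the boundary of the simplex $p=[n]$ whose dimension is exactly $n-2=|\l|-3$; since it is automatically at most $n-2$ (it is a proper subcomplex of the $(n-1)$-simplex $[n]$, so it omits $[n]$ itself), the real content is to exhibit a face of $p$ of size $n-1$ that already lies in $F_{<p}$, i.e.\ a codimension-one face $[n]\setminus\{i\}$ of $p$ which is a facet of some open chain $q\in\Delta_\l$ with $q<p$.

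First I would set up notation as in the paragraph preceding the lemma: label the nodes of $p$ by $1,\dots,n$, with $0$ and $n+1$ the endpoints $0$ and $\l$, and write $\bar p$ for the closed chain. Deleting node $i$ from $p$ means replacing the two links $x^i,x^{i+1}$ adjacent to node $i$ by the single ``long'' link $x^i+x^{i+1}$, which passes from node $i-1$ to node $i+1$; this is a valid facet of $\Delta_\l$ precisely when $x^i+x^{i+1}\in(V(n,d))$ with $|x^i+x^{i+1}|=2$, which is automatic since the pieces of a chain have $\a$-degree living in $V(n,d)$ — wait, more carefully: $x^i+x^{i+1}$ is a sum of two elements of $V(n,d)$, hence lies in $V(n,2d)=(\Gamma)+(\Gamma)\subseteq(V(n,d))$ always, and contracting a node always yields a chain with one fewer node. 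So \emph{every} facet $[n]\setminus\{i\}$ of $p$ is the top face of the contracted open chain $p_i$ obtained by merging links $i$ and $i+1$. The task therefore reduces to: among the $n$ contracted chains $p_1,\dots,p_n$, at least one satisfies $p_i<p$ in our ordering (recall: compare $\a$-degree first, then lexicographic order on the closed chains).

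The key step is to choose $i$ cleverly. Since $p$ has $\a$-degree $\ge 1$, fix a node $j$ whose link $x^j=\a$. If I can contract a node adjacent to an occurrence of $\a$ in a way that does \emph{not} create a new occurrence of $\a$, the $\a$-degree strictly drops and we are done regardless of lexicographic order. Merging link $x^j=\a$ with a neighbor $x^{j+1}$ gives the long link $\a+x^{j+1}$, which has $|\a+x^{j+1}|=2$, so it is never equal to $\a$ (which has $|\a|=1$); the only worry is that some \emph{other} link elsewhere in the chain could change — but contracting node $j$ only alters the links at positions $j,j+1$, leaving all others fixed. Thus $\deg_\a p_j=\deg_\a p-1<\deg_\a p$ (we lose the $\a$ at position $j$, and $\a+x^{j+1}\neq\a$), hence $p_j<p$ and $[n]\setminus\{j\}\in F_{<p}$, giving $\dim F_{<p}\cap p\ge n-2$. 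Combined with the trivial upper bound $\dim F_{<p}\cap p\le n-2$ (as $F_{<p}\subseteq\Delta_\l$ does not contain the facet $p=[n]$, so every face of $F_{<p}\cap p$ is a proper face of $p$), this proves $\dim F_{<p}\cap p=n-2=|\l|-3=\dim\Delta_\l-1$.

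The main obstacle I anticipate is a boundary subtlety at the ends of the chain: if the $\a$-labelled node $j$ is the first node ($j=1$) or the last node ($j=n$), one of its ``neighbors'' is an endpoint rather than a link, so I must merge $x^1=\a$ with $x^2$ (resp.\ $x^n=\a$ with $x^{n-1}$), which is still fine as long as $n\ge 2$; since $|\l|\ge 7$ we have $n\ge 6$, so there is always a genuine link to merge with and no degenerate case arises. A second point to check is that the resulting contracted chain is genuinely an \emph{open} chain from $0$ to $\l$ of length $n-1\ge 1$ — again guaranteed by $n\ge 2$ — and that it lies in $\Delta_\l$, which holds because every link of $p_j$ other than the merged one was already a link of $p\in\Delta_\l$, and the merged link $\a+x^{j+1}\in V(n,2d)$ is a valid ``link'' for $\Delta_\l$ in the sense of being in the semigroup $(V(n,d))$. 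These are the only things to verify; the argument is otherwise immediate.
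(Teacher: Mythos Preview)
Your argument contains a genuine gap. The ordering and the complex $F_{<p}$ are defined with respect to \emph{facets} of $\Delta_\l$, that is, maximal open chains from $0$ to $\l$ every link of which lies in $V(n,d)$ (see the definition of $P(a,b)$ and the decomposition $\Delta_\l=\Gamma_\l\cup p^1\cup\cdots\cup p^k$ in the introduction). Your contracted object $p_j$, obtained by merging the links $x^j=\a$ and $x^{j+1}$ into the single link $\a+x^{j+1}\in V(n,2d)$, is not such a facet: it has only $n-1$ nodes and one link of total degree $2d$. Consequently $p_j\notin P(0,\l)$, the comparison ``$p_j<p$'' is not defined, and you have not established that the face $[n]\setminus\{j\}$ belongs to $F_{<p}$. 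What is actually required is a genuine facet $q$ of $\Delta_\l$ satisfying both $q<p$ and $[n]\setminus\{j\}\subseteq q$.

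The missing step is exactly where the $2$-fullness hypothesis enters, and it is what the paper's proof supplies. Rather than deleting the node adjacent to an $\a$-link, one \emph{replaces} it by a new node $l$ between its neighbours so that both new links lie in $\Gamma$; this is possible because $\a+x^{j+1}\in V(n,2d)=\Gamma+\Gamma$. The resulting chain $q$ is then a bona fide facet of $\Delta_\l$ with $\deg_\a q\le\deg_\a p-1$, hence $q<p$, and $q\cap p=[n]\setminus\{j\}$. Your final paragraph almost flags the problem (you observe that the merged link is ``valid'' only ``in the sense of being in the semigroup $(V(n,d))$''), but membership in the semigroup is not enough: $F_{<p}$ is built from facets, so you need a covering maximal chain with every link in $V(n,d)$.
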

\begin{proof}Since $p$ is not in $\Gamma_\l$, the $\a$-degree of $p$ is at least $1$. Thus, there exists an index $i$, $0 \le i \le n$ such that the link from $i$ to $i+1$ is equal to $\a$. If $i \le n-1$, then by the $2$-fullness condition, there is a closed chain going from $i$ to $i+2$ whose links are in $\Gamma$. Denote this chain by $(i)(l) (i+2)$. Let $\bar q$ be the closed chain $(0)\cdots (i-1)(i)(l)(i+2)(i+3)\cdots (n+1)$ in $P(0, \l)$. Let $q$ be the corresponding open chain, then $\deg_\a q  \le \deg_\a p - 1$, thus $q < p$, and $q \cap p = [n] \setminus \{i+1\}$. Now if $i \ge 1$, there is a close chain going from $i-1$ to $i+1$ whose links are in $\Gamma$. If we denote this chain by $(i-1)(l)(i+1)$, and let $\bar q$ be the closed chain $(0)(1)\cdots (i-1)(l)(i+1)\cdots (n+1)$, then $q < p$ and $p\cap q = [n] \setminus \{i\}.$ Thus there is at least one facet of $F_{< p}\cap p$ of the form $[n] \setminus \{i\}$ for some $i$. Consequently, $\dim F_{<p} \cap p = \dim \Delta_\l -1$. 
\end{proof}

\begin{Lemma}\label{facet1} A facet of $F_{<p}\cap p$ is of the form $[n] \setminus L$, where $L = \{i, i+1, ..., i+j\}$ is a set of consecutive indices.
\end{Lemma}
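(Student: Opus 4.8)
The plan is to analyze a facet $G$ of $F_{<p}\cap p$, which by definition is of the form $p\cap q$ for some open chain $q<p$ in $\Delta_\l$, and to show that the set $[n]\setminus G$ of nodes ``missing'' from $q$ must be a consecutive block of indices. Recall that $p\cap q$ records exactly the nodes that $p$ and $\bar q$ share, i.e.\ the points of $\bar p$ (equivalently $\bar q$) that both closed chains pass through. So $[n]\setminus (p\cap q)$ is the set of nodes of $\bar p$ that $\bar q$ skips, and on each maximal run of skipped nodes $\bar q$ takes a ``shortcut'' — a subchain of $\bar q$ joining two nodes of $\bar p$ without hitting any intermediate node of $\bar p$. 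The claim is equivalent to saying: if $G$ is a facet, then $\bar q$ can differ from $\bar p$ in at most one such shortcut.

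First I would suppose for contradiction that $[n]\setminus G$ is not a single block of consecutive indices, so that $\bar q$ takes at least two separate shortcuts, say over node-intervals $L_1=\{i_1,\dots,i_1+j_1\}$ and $L_2=\{i_2,\dots,i_2+j_2\}$ with $L_1$ entirely to the left of $L_2$ (and nonadjacent, or at least disjoint with a shared node of $\bar p$ between them). The idea is to produce a chain $q'$ with $q<q'<p$ and $G\subsetneq p\cap q'$, contradicting maximality of $G$ as a facet of $F_{<p}\cap p$: intuitively, ``undo'' the leftmost shortcut $L_1$ — replace the part of $\bar q$ over $L_1$ by the part of $\bar p$ over $L_1$ — keeping the $L_2$ shortcut. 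The resulting closed chain $\bar{q'}$ agrees with $\bar p$ on $L_1$, so $p\cap q'\supseteq G\cup L_1 \supsetneq G$, and $q'$ still differs from $p$ (it still has the $L_2$ shortcut), so $q'\ne p$ hence $q'\cap p$ is a proper face of $p$, i.e.\ $q'$ is a genuine chain in $\Delta_\l$ with $q'\in F_{<p}$ provided $q'<p$.

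The technical heart is therefore to arrange that $q'<p$ in the chosen order, i.e.\ either $\deg_\a q' < \deg_\a p$, or $\deg_\a q' = \deg_\a p$ and $q' < p$ lexicographically. Here I would use the freedom in choosing the shortcut used to build $q$ in the first place together with $2$-fullness: restoring $\bar p$ over $L_1$ only reintroduces whatever links $\bar p$ has there, so $\deg_\a q' \le \deg_\a p$ automatically, and the inequality is strict unless the restored segment carries no extra $\a$'s relative to what $\bar q$ had over $L_1$. In the latter case one falls back on the lexicographic comparison: since $q<p$, and the only change from $q$ to $q'$ is on the leftmost block $L_1$ while $p$ and $q'$ now agree there, one should be able to read off $q'<p$ from $q<p$ by comparing the first place of disagreement — the first disagreement between $q'$ and $p$ now lies within or to the right of $L_2$, and there $q'$ equals $q$, which is $<p$. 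I expect the main obstacle to be precisely this bookkeeping: making the order comparison $q<p \Rightarrow q'<p$ airtight, since the ordering is defined via the associated closed chains and one must be careful that ``restoring a block of $\bar p$'' does not accidentally raise the chain above $p$ — this is where $2$-fullness (guaranteeing the shortcuts exist with controllable $\a$-degree) and the specific definition of the chain order must be combined carefully.
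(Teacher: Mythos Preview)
Your plan has the right overall shape --- find a chain $q'$ with $q'<p$ and $p\cap q'\supsetneq G$ --- but the specific construction and justification do not work.

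The central gap is the claim that restoring $\bar p$ over $L_1$ yields $q'<p$. First, the assertion ``$\deg_\a q'\le\deg_\a p$ automatically'' is unsupported: $q'$ agrees with $p$ on $L_1$ and on the shared nodes, but on $L_2$ it carries $q$'s shortcut $c_2$, and nothing so far bounds $\deg_\a c_2$ by $\deg_\a\bar L_2$. Second, and more seriously, your lexicographic step ``the first disagreement between $q'$ and $p$ now lies in $L_2$, and there $q'=q$, which is $<p$'' is a non sequitur. The inequality $q<p$ is a comparison of the full chains; if $\deg_\a q=\deg_\a p$, it is decided at the \emph{first} place of disagreement, which lies in $L_1$. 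It tells you nothing about how $c_2$ compares to $\bar L_2$. Indeed, once one makes the reductions the paper carries out (namely $\deg_\a\bar L_i=0$ for all $i$, forced by Lemma~\ref{dim}, and then each $\bar L_i$ minimal), one finds $c_2>\bar L_2$ in lex order, so your $q'$ is in fact \emph{larger} than $p$, not smaller.

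The paper's proof avoids this by modifying $p$ rather than $q$: it first uses Lemma~\ref{dim} to rule out $\deg_\a\bar L_i\ge1$, then shows that if some $\bar L_i$ is not minimal one can replace just that subchain of $\bar p$ by a smaller one to obtain $q'<p$ with $p\cap q'=[n]\setminus L_i\supsetneq F$. Once every $\bar L_i$ is minimal with $\a$-degree $0$, one gets $\deg_\a q=\deg_\a p$ and then $q\ge p$ link-by-link, contradicting $q<p$. If you want to salvage your ``restore a block'' idea, the correct move is to restore \emph{all blocks except} the one where the first lex disagreement occurs (after the $\a$-degree reductions), not to restore that block; but this is essentially the paper's argument in disguise.
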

\begin{proof} Let $F$ be a facet of $F_{<p} \cap p$. Then $F = p \cap q$ for some chain $q < p$. Assume that $F = [n] \setminus (L_1 \cup L_2 \cup \cdots \cup L_k)$ where $L_1, ..., L_k$ are disjoint consecutive sets of indices. Assume by contradiction that $k \ge 2$. From the proof of Lemma \ref{dim}, if $\deg_\a \bar L_i \ge 1$ for some $i$, then there is a facet of $F_{<p} \cap p$ of dimension $n-2$ containing $F$, which is a contradiction. Therefore $\deg_\a \bar L_i = 0$ for all $i$.

Now we claim that for every $i$, the chain $\bar L_i$ is a minimal chain. Assume that $\bar L_i$ is not minimal for some $i$. Let $\bar L_i = (l)(l+1)\cdots (k)$, and let $\bar L_i' = (l)(x)(x+1)\cdots(y)(k)$ be a smaller chain. Let $q'$ be the open chain whose corresponding closed chain is $\bar q' = (0)\cdots (l)(x)(x+1)\cdots (y)(k) \cdots (n+1)$. Since $\bar L_i' < \bar L_i$, we have $\bar q' < \bar p$, thus $q' < p$. Moreover, $q' \cap p = [n] \setminus L_i$ containing $F$, which is a contradiction. 

Thus $L_i$ is minimal for all $i$. Now, note that $\deg_\a \bar q = \deg_\a \bar p$, as $\deg_\a \bar L_i = 0$ for all $i$. Since $\bar L_i$ are minimal, $q \ge p$, which is a contradiction.
\end{proof}

The following simple observations will be useful in the sequences. First, let us recall the following notation introduced in section \ref{sec2}. For each $m \in \NN^n$, denote $\Gamma_{\ll m} = \{\l \in \Gamma: \lambda \ll m\}$, and $\Delta_{\ll m} = \{\l\in V(n,d): \l \ll m\}.$

\begin{Fact}\label{minimalpath} Let $x^1\cdots x^k$ be a minimal chain of $\a$-degree $0$ and $k \ge 3$. Let $m = x^1 + \cdots +x^k$. Denote by $y^1$ the minimal element in the set $\Gamma_{\ll m}$. By the $2$-fullness condition, there exist $y^2, ..., y^k$ in $\Gamma$ such that $y^2 + \cdots + y^k = x^1 + \cdots + x^k - y^1$. Since $x^1 \cdots x^k$ is a minimal chain, $y^1 \cdots y^k \ge x^1 \cdots x^k$, thus $y^1 \ge x^1$. In other words, $x^1$ is the minimal element in the set $\Gamma_{\ll m}$.
\end{Fact}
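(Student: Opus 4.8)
The plan is to show that if $x^1\cdots x^k$ is a minimal chain of $\a$-degree $0$ with $k\ge 3$, then its first link $x^1$ is forced to be the smallest element of $\Gamma_{\ll m}$, where $m=x^1+\cdots+x^k$. First I would observe that $\Gamma_{\ll m}$ is non-empty (it contains $x^1$), so it has a well-defined minimal element $y^1$ in the lexicographic order, and $y^1\le x^1$ by minimality of $y^1$. The point is to produce an actual chain $y^1\cdots y^k$ from $0$ to $m$ with links in $\Gamma$ that starts with $y^1$; then minimality of $x^1\cdots x^k$ among chains in $P(0,m)$ forces $x^1\cdots x^k\le y^1\cdots y^k$ in the ordering from the introduction, and since $\deg_\a$ of the $x$-chain is $0$ (so $\deg_\a$ is as small as possible), the ordering reduces to lexicographic comparison, which gives $x^1\le y^1$, hence $x^1=y^1$.

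The key step is the construction of the chain $y^1\cdots y^k$. Here I would invoke the $2$-fullness hypothesis in the form recorded just before Lemma \ref{dim}: if $a\ll b$ lie in the semigroup generated by $V(n,d)$ and $|b|-|a|\ge 2$, then there is a closed chain from $a$ to $b$ with all links in $\Gamma$. Apply this with $a=y^1$ and $b=m$: we have $y^1\ll m$ since $y^1\in\Gamma_{\ll m}$, and $|m|-|y^1|=k-1\ge 2$, so there is a closed chain from $y^1$ to $m$ with links $y^2,\dots,y^k\in\Gamma$, i.e.\ $y^2+\cdots+y^k=m-y^1$. Prepending the link $y^1$ (which goes from $0$ to $y^1$) yields the desired closed chain $0,y^1,\dots,m$ in $P(0,m)$; its associated open chain lies in $\Gamma_\l$-type position but what matters is simply that it belongs to $P(0,m)$.

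Finally I would conclude by comparing the two chains in $P(0,m)$. Since $x^1\cdots x^k$ is the minimal chain of $P(0,m)$, it is $\le$ the chain $y^1\cdots y^k$ in the total order described in the introduction, which first compares $\deg_\a$ and then breaks ties lexicographically. The chain $x^1\cdots x^k$ has $\a$-degree $0$, the smallest possible value, so it cannot be strictly larger in $\a$-degree; hence the comparison is by lexicographic order, giving $x^1\cdots x^k\le_{\mathrm{lex}} y^1\cdots y^k$ and in particular $x^1\le y^1$. Combined with $y^1\le x^1$ from the choice of $y^1$, we get $x^1=y^1$, which is exactly the assertion that $x^1$ is the minimal element of $\Gamma_{\ll m}$. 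The only mild subtlety — the main thing to be careful about — is checking that the ordering on $P(0,m)$ genuinely collapses to lexicographic comparison once one chain has $\a$-degree $0$; this is immediate from the definition since $\deg_\a\ge 0$ always, so a chain of $\a$-degree $0$ is never strictly greater than another chain in $\a$-degree. No further computation is needed.
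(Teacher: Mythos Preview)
Your proposal is correct and follows exactly the reasoning the paper embeds in the statement of the Fact itself (there is no separate proof): take $y^1$ minimal in $\Gamma_{\ll m}$, use $2$-fullness with $|m|-|y^1|=k-1\ge 2$ to extend to a chain $y^1\cdots y^k$ with all links in $\Gamma$, and compare with the minimal chain $x^1\cdots x^k$ to force $x^1\le y^1$. The only small sharpening is that the comparison collapses to lexicographic order because \emph{both} chains have $\a$-degree $0$ (all $y^i\in\Gamma$ by construction), not merely because the $x$-chain does; you already have this ingredient, so the argument is complete.
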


\begin{Fact}\label{orderofL} Let $F = [n] \setminus L$ be a facet of $F_{<p} \cap p$. Assume that $\bar L = x^1 \cdots x^k$ and $k \ge 3$. From the proof of Lemma \ref{facet1}, we have $\deg_\a \bar L =0$ and $\bar L$ is not minimal.  

Assume that a proper subchain $x^l \cdots x^t$ of $\bar L$ is not minimal. Let $x^l \cdots x^k = (i+l-1) \cdots (i+t)$. Let $(i+l-1)(s)(s+1)\cdots (i+t)$ be the minimal chain whose endpoints are $(i+l-1)$ and $(i+t)$. Let $\bar q = (0)\cdots (i+l-1)(s)(s+1)\cdots (i+t)\cdots(n+1)$, then $q < p$, but $q \cap p \supsetneq F$, which is a contradiction. Therefore any proper subchain of $\bar L$ is minimal. In particular $x^1 \le x^2 \le \cdots \le x^k$.
\end{Fact}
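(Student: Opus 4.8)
The plan is to argue throughout by contradiction with the maximality of $F$ inside $F_{<p}\cap p$: as soon as one produces a chain $q<p$ with $q\cap p\supsetneq F$, one is done, since $q\cap p$ is then a face of $F_{<p}\cap p$ strictly larger than $F$. First I would record the two preliminary assertions, which come straight out of the proofs of Lemmas~\ref{dim} and~\ref{facet1} applied to the single block $L$; note $|L|\ge 2$ because $k=|L|+1\ge 3$. If $\bar L$ contained a link equal to $\a$, the detour of Lemma~\ref{dim} around that link (which replaces it by two links in $\Gamma$) would yield a $q<p$ with $q\cap p=[n]\setminus\{s\}$ for a single index $s\in L$, contradicting maximality of $F$; hence $\deg_\a\bar L=0$. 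And if we write $F=q_0\cap p$ with $q_0<p$, then $\bar q_0$ agrees with $\bar p$ outside the subchain joining the two nodes of $p$ that bound $L$; since $\deg_\a\bar L=0$, the inequality $q_0<p$ forces that subchain of $\bar q_0$ to have $\a$-degree $0$ and to precede $\bar L$ lexicographically, so $\bar L$ is not minimal.

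For the main assertion I would suppose some proper subchain $x^l\cdots x^t$ of $\bar L$ is not minimal; being part of $\bar L$ it has $\a$-degree $0$. Let $w,w'$ be the two nodes of $\bar p$ that bound this subchain, and let $q$ be the chain obtained from $p$ by replacing the stretch between $w$ and $w'$ with the minimal chain of $P(w,w')$. Since every chain in $P(w,w')$ has the same number of links, $q$ is again a facet of $\Delta_\l$; the minimal chain has $\a$-degree $\le\deg_\a(x^l\cdots x^t)=0$, so $\deg_\a\bar q=\deg_\a\bar p$, and since $x^l\cdots x^t$ is not minimal the replacement strictly precedes it lexicographically, whence $\bar q<\bar p$, i.e. $q<p$. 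On the other hand $q$ retains every interior node of $p$ outside the open stretch strictly between $w$ and $w'$, and properness of the subchain forces at least one of $w,w'$ to be an interior node of $p$ whose label lies in $L$; that node then witnesses $q\cap p\supsetneq F$, a contradiction. Hence every proper subchain of $\bar L$ is minimal. Finally, for $1\le m\le k-1$ the two-link subchain $x^m x^{m+1}$ is proper (here $k\ge 3$ is used), hence minimal, and a minimal two-link chain must have its links in non-decreasing order, for otherwise swapping them produces a strictly smaller chain of $P$ with the same endpoints and $\a$-degree $0$; this gives $x^1\le x^2\le\cdots\le x^k$.

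The argument is essentially formal once the node-labelling is in place, so there is no genuine obstacle; the one spot that needs care is the penultimate step, where one must confirm that an endpoint of the offending subchain is an interior node of $p$ bearing a label in $L$, so that $q\cap p$ is \emph{strictly} bigger than $F$. This is a short case split according to whether that subchain abuts the left end of $\bar L$, its right end, or neither. The hypothesis $k\ge 3$, i.e. $|L|\ge 2$, is exactly what makes the preliminary claim $\deg_\a\bar L=0$ available and, at the end, what makes each two-link subchain proper; without it the conclusion $x^1\le\cdots\le x^k$ genuinely fails.
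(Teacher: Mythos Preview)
Your argument is correct and follows essentially the same route as the paper: replace a non-minimal proper subchain of $\bar L$ by the minimal chain with the same endpoints to produce $q<p$ with $q\cap p\supsetneq F$, contradicting maximality of $F$; then apply this to each two-link subchain to get $x^m\le x^{m+1}$. You supply a bit more justification than the paper does (why the replacement keeps $\a$-degree $0$, why a proper subchain has an endpoint labelled in $L$, why a minimal two-link chain is non-decreasing), but there is no substantive difference in strategy.
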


We are now ready to analyze in more detail the structure of the simplicial complex $F_{<p} \cap p$. 

\begin{Lemma}\label{facet2} Let $F = [n] \setminus L$ be a facet of $F_{<p}\cap p$. Then $|L| \le 2$. 
\end{Lemma}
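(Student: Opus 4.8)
The goal is to show that a facet $F = [n]\setminus L$ of $F_{<p}\cap p$ has $|L|\le 2$. By Lemma \ref{facet1} we already know $L=\{i,i+1,\dots,i+j\}$ is an interval, and by Fact \ref{orderofL} (together with the proof of Lemma \ref{facet1}) we know $\deg_\a \bar L = 0$, the chain $\bar L$ is not minimal, but every \emph{proper} subchain of $\bar L$ is minimal. I would argue by contradiction: assume $|L| = k+1 \ge 3$, write $\bar L = x^1\cdots x^{k+1}$ as a chain from node $i-1$ to node $i+j+1$ with $x^1\le\cdots\le x^{k+1}$, and set $m = x^1+\cdots+x^{k+1}$. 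The plan is to produce a chain $q < p$ with $q\cap p \supsetneq F$, contradicting maximality of $F$.

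\textbf{Key steps.} First, since $\bar L$ is not minimal but its length-$k$ proper prefix $x^1\cdots x^k$ is minimal, Fact \ref{minimalpath} (applied to that prefix, valid since $k\ge 2$; if $k=2$ one uses Fact \ref{nonminimalx1} instead) tells us $x^1$ is the smallest element of $\Gamma_{\ll x^1+\cdots+x^k}$. Similarly the proper suffix $x^2\cdots x^{k+1}$ is minimal, so $x^2$ is the smallest element of $\Gamma_{\ll x^2+\cdots+x^{k+1}}$. Next I would replace $x^1$ by the smallest element $y^1$ of $\Gamma_{\ll m}$: since $\bar L$ is not minimal, either $y^1 < x^1$, or $y^1 = x^1$ but the second link can be lowered. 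In the first case, $m - y^1$ still lies in $(\Gamma)$ and $|m-y^1| = k$ with $k\ge 2$, so by $2$-fullness there is a chain from node $i-1$'s image to node $i+j+1$ with first link $y^1$ and $\a$-degree $0$; call the resulting closed chain $\bar q$. Because $\deg_\a\bar q = \deg_\a\bar p$ and $\bar q$ agrees with $\bar p$ outside $\bar L$, and the replacement strictly decreases the lexicographic value of the $\bar L$-portion (first link dropped), we get $\bar q < \bar p$, hence $q < p$. Crucially the new chain $q$ still traverses nodes $i+1,\dots,i+j$ is \emph{not} required — what I actually want is that $q$ and $p$ share \emph{more} nodes than $F$. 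The correct move: take $\bar q$ to be $\bar p$ with only the \emph{last two links} $x^k x^{k+1}$ of $\bar L$ replaced by the minimal chain on the same endpoints (this lowers the chain since $x^k x^{k+1}$ — wait, it is minimal as a proper subchain).

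Let me instead structure the contradiction through the first link directly: since $x^1$ is the minimal element of $\Gamma_{\ll x^1+\cdots+x^k}$ but $\bar L = x^1\cdots x^{k+1}$ is not minimal, the obstruction must involve $x^{k+1}$; that is, $x^1$ is \emph{not} minimal in $\Gamma_{\ll m}$, so there is $y^1\in\Gamma_{\ll m}$ with $y^1 < x^1$. Form $\bar q$ by keeping node $i-1$, going $y^1$, then using $2$-fullness to fill from $i-1+y^1$ to $i+j+1$ by a $\Gamma$-chain of length $k$ and $\a$-degree $0$ (possible as $|m-y^1|=k\ge 2$ and $m-y^1\in(\Gamma)$ by $2$-fullness), and keeping $\bar p$ elsewhere. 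Then $q < p$ since $\deg_\a \bar q = \deg_\a\bar p$ and lexicographically $\bar q < \bar p$. But now $q\cap p$ contains $F$ together with \emph{every} interior node of $L$ that happens to coincide — this is not automatic, so the real content is to choose the filling chain to match $p$ on at least one more node, e.g. the node adjacent to the endpoint. Concretely, when $k\ge 3$, one can always arrange the $\Gamma$-chain of length $k$ from $i-1+y^1$ to $i+j+1$ to pass through node $i+j$ (the last original interior node): since that node, call it $w$, satisfies $i+j+1 - w$ and $w - (i-1+y^1)$ are nonnegative with $|i+j+1-w| = 1$ and $|w-(i-1+y^1)| = k-1\ge 2$, $2$-fullness gives a $\Gamma$-chain of length $k-1$ from $i-1+y^1$ to $w$, and then the link $x^{k+1}$ from $w$ to $i+j+1$ (which is in $\Gamma$ as it is a link of $p$). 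This yields $q < p$ with $q\cap p \supseteq F\cup\{i+j\} \supsetneq F$, the desired contradiction.

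\textbf{Main obstacle.} The delicate point is the case $k = 2$, i.e. $|L| = 3$, $\bar L = x^1 x^2 x^3$, where the "pass through an extra node" trick would need a $\Gamma$-chain of length $1$, which may fail $2$-fullness (that needs gap $\ge 2$). Here one must argue more carefully using Fact \ref{nonminimalx1}: both $x^1x^2$ and $x^2x^3$ are minimal of $\a$-degree $0$, but $x^1x^2x^3$ is not minimal. I would analyze the smallest element $y^1$ of $\Gamma_{\ll m}$ and show that replacing $x^1$ by $y^1$ and rebuilding via $2$-fullness (length $2$, gap $2$, fine) produces $q < p$ with $q\cap p = F\cup\{$one of the two interior nodes$\}$, using that the specific obstruction structure in Fact \ref{nonminimalx1} ($y^2 = \a$ type obstructions) cannot arise here because $\deg_\a \bar L = 0$ and $\supp\a\supseteq\{n-1,n\}$ forces the excluded configurations away. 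Handling this boundary case cleanly — ensuring the rebuilt chain genuinely shares an extra node rather than merely having the same multiset of links — is where the real work lies, and it will likely require invoking Fact \ref{support} to pin down $y^1$ explicitly and check node coincidence.
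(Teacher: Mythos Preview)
Your strategy has a genuine gap. The plan is to produce a chain $q<p$ with $q\cap p\supsetneq F$, i.e.\ a chain agreeing with $p$ on all of $F$ and on at least one additional node inside $L$. But this is impossible in principle. If such a $q$ existed, then on the $\bar L$-portion the corresponding subchain $\bar L'$ of $q$ would be a $\Gamma$-chain (since $\deg_\a\bar L=0$ forces $\deg_\a\bar L'=0$ whenever $q\le p$) between the same endpoints as $\bar L$, passing through some interior node $P_t$, with $\bar L'<\bar L$. Splitting $\bar L'$ at $P_t$ yields either a $\Gamma$-chain strictly smaller than the proper prefix $x^1\cdots x^t$, or one strictly smaller than the proper suffix $x^{t+1}\cdots x^\ell$; either contradicts Fact~\ref{orderofL}, which says every proper subchain of $\bar L$ is already minimal. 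Your concrete ``pass through the last interior node $w$'' move runs straight into this: it needs $y^1\ll x^1+\cdots+x^k$, but the proper prefix $x^1\cdots x^k$ is minimal, so by Fact~\ref{minimalpath} (for $k\ge 3$) $x^1$ is the smallest element of $\Gamma_{\ll x^1+\cdots+x^k}$, whence $y^1<x^1$ forces $y^1\not\ll x^1+\cdots+x^k$ and the construction collapses. No rerouting can fix this; the minimality of all proper subchains blocks every interior node.

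The paper's proof does not try to enlarge $F$ at all. Instead it assumes $\bar L$ has $k\ge 4$ links, records that $x^1$ is minimal in $\Gamma_{\ll x^1+\cdots+x^{k-1}}$, that $x^2$ is minimal in $\Gamma_{\ll x^2+\cdots+x^k}$, and that $x^1$ is \emph{not} minimal in $\Gamma_{\ll m}$, and then performs a case analysis via Fact~\ref{support} on the three possible explicit shapes of $x^1$. In each case, the resulting constraints on $x^2$ (in particular forcing $x^2_n\le 1$) pin $\a$ down to one of the excluded forms $(0,\ldots,0,1,1)$, $(0,\ldots,0,1,2)$, or $(0,\ldots,0,2,2)$, a contradiction. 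That case analysis is exactly where the standing hypothesis that $\a$ is not $(0,\ldots,0,2,d-2)$ or a permutation enters; your argument never invoked this hypothesis, which is itself a warning sign that the approach could not succeed.
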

\begin{proof} Let $F = [n] \setminus L$ where $L = \{i+1, ..., i+k-1\}$ be a facet of $F_{<p} \cap p$. Assume that $k\ge 4$, and $\bar L = x^1\cdots x^k$. Let $m = x^1 + \cdots + x^k = y^1 + \cdots + y^k$. Let $u = x^1 + \cdots + x^{k-1}$ and $v = x^2 + \cdots + x^k$. By Fact \ref{minimalpath} and Fact \ref{orderofL}, $x^1$ is the minimal element in $\Gamma_{\ll u}$ and $x^2$ is the minimal element in $\Gamma_{\ll v}$. Moreover, by Fact \ref{orderofL}, $x^1\cdots x^k$ is not a minimal chain, thus $x^1$ is not the smallest element in $\Gamma_{\ll m}$. 

By Fact \ref{support}, there are following possibilities for $x^1$:

{\bf Case 1:} $x^1 = (0, ..., s_i, u_{i+1}, ..., u_n)$ for some $i$. Since $x^1$ is not minimal in $\Gamma_{\ll m}$, $i < n$. Thus $x^2_j = ... = x^{k-1}_j = 0$ for all $j \ge i+1$. If $x^k_{i+1} = ... = x^k_n = 0$, then $x^1$ is also the smallest element in $\Gamma_{\ll m}$ which is a contradiction. Thus $x^k_j > 0$ for some $j \ge i+1$. Moreover, $x^2$ is the smallest element in $\Gamma_{\ll v}$, by Fact \ref{support}, this implies that $\a$ is the smallest element in $\Delta_{\ll v}$. Let $j$ be the index such that $\sum_{t > j} v_t < d$ while $\sum_{t \ge j} v_t \ge d$. By Fact \ref{support}, we have $\a = (0, ..., 0, a_j, v_{j+1}, ..., v_n)$. In particular, $j <n$, and $v_t > 0$ for all $t \ge j$. Since $x^2$ is the minimal element in $\Gamma_{\ll v}$, by Fact \ref{support} and the fact that $x^2_n = 0$, we have $j = n-1$, and $v_n = 1$. This implies that $\a = (0, ..., 0, 1, 1)$, since $a_1 \le a_2 \le \cdots \le a_{n-1} \le a_n$, which is a contradicition since $d \ge 3$.

{\bf Case 2:} $x^1 = (0, ..., 0, s_i+1, u_{i+1}-1, u_{i+2}, ..., u_n)$ for some $i$. By Fact \ref{support}, this is the case if and only if $\a = (0,...,0,s_i, u_{i+1}, ..., u_n)$, and $u_i > s_i$. Since $\supp \a$ contains $n-1$, we have $i < n$. Moreover, we have $x^2_t = ... = x^{k-1}_t = 0$ for all $t \ge i+2$ and $x^2_{i+1} + \cdots + x^{k-1}_{i+1} = 1$. 

If $\a \not \ll v$, then by Fact \ref{support} and the fact that $x^2$ is the minimal element in $\Gamma_{\ll v}$, we have $x^2 = (0, ..., 0,x^2_j, v_{j+1}, ..., v_n)$ for some $j$. Since $x^2_t = 0$ for all $t \ge i+2$, and $x^2_{i+1} \le 1$, this implies that $j \le i$. Therefore $x^k_t = 0$ for all $t \ge i+1$. In particular, $x^1$ is the minimal element in $\Gamma_{\ll m}$, which is a contradiction. 

Thus we may assume that $\a = (0, ..., 0, s_i, v_{i+1}, ..., v_n)$. Since $x^2$ is minimal in $\Gamma_{\ll v}$, by Fact \ref{support} and the fact that $x^2_{i+2} = 0$, this implies that $i = n-1$. Therefore, we have $\a = (0,...,0,s_i, u_n) = (0,...,0,s_i, v_n)$. If $x^k_n = 0$, then $x^1 = (0,...,0,s_{n-1}+1, u_n-1)$ is the smallest element in $\Gamma_{\ll m}$, which is a contradiction. Thus we may assume that $x^k_n > 0$. By Fact \ref{support}, and the fact that $x^2$ is minimal in $\Gamma_{\ll v}$, this implies that $x^2 = (0,...,0,s_i+1,v_n-1)$. Since $x^2_n \le 1$, we have $v_n \le 2$. Since $d \ge 3$ and $a_{n-1} \le a_n$, this implies that $\a = (0, ..., 0, 1, 2)$ or $\a = (0, ..., 0, 2, 2)$. But they are not possible, since $\a$ is not of the form $(0, ...,0, 2, d-2)$ or one of its permutations. 

{\bf Case 3:} $x^1 = (0, ..., 1, 0, ..., 0, u_i - 1, u_{i+1}, ..., u_n)$ for some $i$, and $1$ is in position $j < i$. By Fact \ref{support}, this is the case if and only if $x^2_l= ...= x^{k-1}_l = 0$ for all $l \ge i+1$ and all $j+1 \le l < i$, and $\a = (0,...,0,u_i, u_{i+1}, ..., u_n).$ Moreover $x^2_i + \cdots + x^{k-1}_i = 1$. Since $\supp \a$ contains $n-1$, we have $i \le n-1$. In particular $x^2_n = 0$. 

If $x^k_n = 0$, then $v_n = 0$. Therefore $\a \not \ll v$. By Fact \ref{support} and the fact that $x^2$ is minimal in $\Gamma_{\ll v}$, this implies that $x^k_j = 0$ for all $l \ge i$ and all $j+1 \le l < i$. In particular $x^1$ is the smallest element in $\Gamma_{\ll m}$, which is a contradiction. 

Thus we may assume that $x^k_n > 0$. By Fact \ref{support}, the fact that $x^2$ is minimal in $\Gamma_{\ll v}$ and the fact that $x^2_n = 0$, this implies that $i = n-1$. In particular, we have $\a = (0, ..., 0, a_{n-1}, v_n)$. Moreover, $x^2 = (0,..., 0, a_{n-1} +1, v_n-1)$. Therefore $v_n = 1$. Since $a_{n-1} \le a_n$, we have $\a = (0, ..., 0, 1, 1)$, which is a contradiction.
\end{proof}

\begin{Lemma}\label{facet3} If $[n] \setminus \{i, i+1\}$ is a facet of $F_{<p} \cap p$, then neither $[n] \setminus \{i+1,i+2\}$ nor $[n] \setminus \{i+2,i+3\}$ are facets of $F_{<p} \cap p$. 
\end{Lemma}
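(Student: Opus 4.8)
The plan is to argue by contradiction, analyzing what it would mean for two overlapping or consecutive pairs to both be facets of $F_{<p}\cap p$. Suppose $[n]\setminus\{i,i+1\}$ is a facet. By Lemma \ref{facet1} and the proof of Lemma \ref{dim}, the subchain $\bar L$ from node $i-1$ to node $i+2$ (with $L=\{i,i+1\}$) has $\a$-degree $0$ and, by Fact \ref{orderofL}, is a non-minimal chain $x^1x^2x^3$ whose proper subchains $x^1x^2$ and $x^2x^3$ are minimal, with $x^1\le x^2\le x^3$; moreover by Fact \ref{minimalpath} applied to $\bar L$, $x^1$ is the minimal element of $\Gamma_{\ll m}$ where $m=x^1+x^2+x^3$ — wait, that is false since $\bar L$ is non-minimal, so in fact $x^1$ is \emph{not} minimal in $\Gamma_{\ll m}$, and Fact \ref{support} lists the three possible forms for $x^1$. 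The key structural consequence I expect is that, in each case of Fact \ref{support}, the obstruction to minimality is very rigid: it forces $\a\ll m$ together with strong constraints on the last coordinates (this is exactly the mechanism already exploited in Lemma \ref{facet2}). In particular, I expect to extract that $x^3_n>0$ while $x^1_n$ and $x^2_n$ are very small, and that the "defect" of $\bar L$ from being minimal is concentrated in the transition out of $x^2$ into $x^3$.

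Next I would bring in the hypothetical second facet. If $[n]\setminus\{i+1,i+2\}$ were also a facet, then the subchain $\bar L'$ from node $i$ to node $i+3$, namely $x^2x^3x^4$ (where $x^4$ is the link from node $i+2$ to $i+3$), would likewise have $\a$-degree $0$ and be non-minimal with minimal proper subchains; in particular $x^2x^3$ would be minimal — which we already have — but also $x^2$ would need to fail to be minimal in $\Gamma_{\ll(x^2+x^3+x^4)}$ in the prescribed way. But we already know from the first facet that $x^2$ \emph{is} the minimal element of $\Gamma_{\ll v}$ with $v=x^2+x^3$ (Fact \ref{minimalpath} applied to $x^2x^3$ inside $\bar L$). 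I would combine these: knowing $x^2$ is minimal in $\Gamma_{\ll v}$ severely restricts $v=x^2+x^3$ via Fact \ref{support} (essentially $v$ must have its smallest-index-reaching-$d$ block behave a certain way), and then the requirement coming from $\bar L'$ being non-minimal — that $x^2x^3x^4$ is non-minimal while $x^2x^3$ and $x^3x^4$ are minimal — would force the same $\a\ll(x^2+x^3)$-type conclusion, which I expect to collide with one of the coordinate inequalities (again along the lines of $\a=(0,\dots,0,1,1)$ or $\a=(0,\dots,0,2,2)$, both excluded). The case $[n]\setminus\{i+2,i+3\}$ being a facet is, I anticipate, handled by noting that then the nodes $i+1,i+2$ lie in the facet complement of neither the first nor the third pair, so the links $x^2,x^3$ are "locked" by the minimality of $x^1x^2x^3$'s proper subchains and by $x^2x^3x^4$'s structure — but here I'd instead look at the subchain from node $i-1$ to node $i+4$, which contains both non-minimal triples, and derive that the whole five-link chain can be shortened in a way that produces a larger facet, contradicting that both were facets.

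I would organize the write-up by first recording the shared setup (both alleged facets force non-minimal triples with minimal proper subchains of $\a$-degree $0$), then dispatching the overlapping case $\{i,i+1\},\{i+1,i+2\}$ via the Fact \ref{support} trichotomy applied simultaneously to $x^1$ in $\Gamma_{\ll x^1+x^2}$ and (using that $x^2$ is already pinned as minimal in $\Gamma_{\ll x^2+x^3}$) to the non-minimality of $x^2x^3x^4$, and finally the gap-one case $\{i,i+1\},\{i+2,i+3\}$ by exhibiting a common shortening of the subchain $\bar L''$ from node $i-1$ to node $i+4$. The main obstacle I foresee is the bookkeeping in the overlapping case: I must track which coordinates of $x^1,x^2,x^3,x^4$ are forced to vanish and which of $\a\ll(x^1+x^2)$, $\a\ll(x^2+x^3)$, $\a\ll(x^2+x^3+x^4)$ actually hold, since the three branches of Fact \ref{support} interact, and the contradiction only appears after pushing the support constraints to the point where $\a$ is forced into one of the two excluded shapes $(0,\dots,0,1,1)$ or a permutation of $(0,\dots,0,2,d-2)$. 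I expect the argument to close exactly as in Lemma \ref{facet2}, reusing $d\ge 3$ and $a_{n-1}\le a_n$ at the final step.
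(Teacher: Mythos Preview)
Your plan has two genuine gaps, and it misses the structural fact that makes the paper's argument work.

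First, you claim that ``$x^2$ is the minimal element of $\Gamma_{\ll v}$ with $v=x^2+x^3$ (Fact~\ref{minimalpath} applied to $x^2x^3$ inside $\bar L$)''. But Fact~\ref{minimalpath} requires the chain to have length $k\ge 3$; for a minimal $2$-chain $x^2x^3$ the first link need \emph{not} be minimal in $\Gamma_{\ll v}$ --- precisely this failure is the content of Fact~\ref{nonminimalx1}. So your combination step in the overlapping case rests on a false premise, and the paper's own Case~2 and Case~3 (where $x^2$, respectively $x^1$, is not minimal in the relevant $\Gamma_{\ll\cdot}$) show these situations really occur and must be analyzed.

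Second, your treatment of the gap-one case $\{i,i+1\},\{i+2,i+3\}$ proposes to shorten the five-link subchain from node $i-1$ to node $i+4$ and thereby ``produce a larger facet''. But any such shortening gives a chain $q<p$ with $q\cap p\supseteq [n]\setminus\{i,i+1,i+2,i+3\}$; this set is contained in, not containing, each of the two alleged facets, so no contradiction to maximality follows. You would need $q\cap p$ to \emph{strictly contain} one of $[n]\setminus\{i,i+1\}$ or $[n]\setminus\{i+2,i+3\}$, and nothing in your sketch produces that.

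The paper takes a different route that handles both exclusions at once. It runs the full trichotomy (whether $x^1$ is minimal in $\Gamma_{\ll(x^1+x^2)}$, and if so whether $x^2$ is minimal in $\Gamma_{\ll(x^2+x^3)}$) and shows that Cases~1 and~2 are impossible outright, while Case~3 forces $x^1=(0,\dots,0,1,d-1)$ and $x^3\ge x^2>x^1$ strictly. Thus \emph{every} facet of the form $[n]\setminus\{j,j+1\}$ has the link from node $j-1$ to node $j$ equal to $(0,\dots,0,1,d-1)$, with the next two links strictly larger. Now if $[n]\setminus\{i+1,i+2\}$ were also a facet, the link from $i$ to $i+1$ (namely $x^2$) would have to equal $(0,\dots,0,1,d-1)$, contradicting $x^2>x^1$; and if $[n]\setminus\{i+2,i+3\}$ were a facet, the link from $i+1$ to $i+2$ (namely $x^3$) would equal $(0,\dots,0,1,d-1)$, contradicting $x^3>x^1$. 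So the argument does not ``close exactly as in Lemma~\ref{facet2}'' by forcing $\a$ into an excluded shape; rather, the case analysis pins down $x^1$ itself, and the two exclusions fall out in one line.
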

\begin{proof} We proceed as in the proof of Lemma \ref{facet2}. Assume that $F = [n]\setminus L$ is a facet of $F_{<p}\cap p$, where $L = \{i, i+1\}$. Moreover, assume that $F = p \cap q$. Let $\bar L = x^1x^2x^3$. By Fact \ref{orderofL}, $\bar L$ has the property that $\bar L$ is not a minimal chain but $x^1x^2$ and $x^2x^3$ are minimal, and moreover $x^1 \le  x^2 \le x^3$. Let $x^1 + x^2 + x^3 = m$. Since $x^1x^2x^3$ is not minimal, $x^1$ is not the smallest element in $\Gamma_{\ll m}$. Let $u = x^1 + x^2$, $v = x^2 + x^3$. There are three cases:

\noindent 
{\bf Case 1:} $x^1$ is the smallest element in $\Gamma_{\ll u}$ and $x^2$ is the smallest element in $\Gamma_{\ll v}$. By Fact \ref{support}, there are following possiblities for $x^1$:

{\bf Case 1a:} $x^1 = (0, ..., 0, s_i, u_{i+1}, ..., u_n)$. Since $x^1$ is not the smallest element in $\Gamma_{\ll m}$, we have $i < n$. Therefore $x^2_{i+1} = ... = x^2_n = 0$. Moreover, if $x^3_{i+1} = ... = x^3_n = 0$, then $x^1$ is also the smallest element in $\Gamma_{\ll m}$, which is a contradiction. Therefore, $x^3_j > 0$ for some $j \ge i+1$. By Fact \ref{support}, the fact that $x^2$ is minimal in $\Gamma_{\ll v}$, and $x^2_n = 0$, this implies that $j = i+1 = n$, and $\a = (0, ..., 1, 1)$, which is a contradiction since $d \ge 3$.

{\bf Case 1b:} $x^1 = (0, ..., s_i+1, u_{i+1} - 1, u_{i+2}, ..., u_n)$. By Fact \ref{support}, this implies $\a = (0, ...,0, s_i, u_{i+1}, ..., u_n)$, $x^2_j = 0$ for all $j \ge i+2$, and $x^2_{i+1} = 1$. 

If $x^3_j = 0$ for all $j \ge i+1$, then $x^1$ is the minimal element in $\Gamma_{\ll m}$, which is a contradiction. Therefore $x^3_j > 0$ for some $j \ge i+1$. 

If $x^3_{j} > 0$ for some $j \ge i+2$, then by Fact \ref{support}, the fact that $x^2$ is minimal in $\Gamma_{\ll v}$, and $x^2_j = 0$ for all $j \ge i+2$, this implies that $j +2 = n$, and $\a = (0,...,a_{n-1}, v_n)$. This is a contradicition, since $i \le n-2$ and $\a = (0, ..., s_i, u_{i+1}, ..., u_n)$. 

If $x^3_{i+1} > 0$, then by Fact \ref{support}, the fact that $x^2$ is minimal in $\Gamma_{\ll v}$, and the fact that $x^2_j = 0$ for all $j \ge i+2$ and $x^2_{i+1} = 1$, this implies that $\a = (0, ...,0, s_i, v_{i+1}, ..., v_n)$, and $x^2 = (0, ..., 0, s_i+1, v_{i+1}-1, ..., v_n)$. Since $x^2_{i+2} = 0$, we have $i = n-1$. Since $x^2_n = x^2_{i+1} = 1$, we have $v_n = 2$. Therefore $\a$ is of the form $(0, ..., 0, 1, 2)$ or $(0, ..., 0, 2, 2)$, which is a contradiction.

{\bf Case 1c:} $x^1 = (0, ..., 1,0, ...,0, s_i - 1, u_{i+1}, ..., u_n)$ where $1$ is in position $j < i$. By Fact \ref{support}, this implies that $\a = (0, ...,0, u_i, u_{i+1}, ..., u_n)$. Since $\supp \a$ contains $n-1$, we have $i \le n-1$. Moreover, we have $x^2_i = 1$ and $x^2_t = 0$ for all $t \ge i+1$, and all $j+1 \le t < i$. 

If $x^3_n = 0$, then $\a \not \ll v$. By Fact \ref{support}, and the fact that $x^2$ is the smallest element in $\Gamma_{\ll v}$, we have $x^3_t = 0$ for all $t \ge i+1$, and all $j+1 \le t < i$. In particular, $x^1$ is the smallest element in $\Gamma_{\ll m}$, which is a contradiction.

Therefore $x^3_n > 0$. Since $x^2_n = 0$, by Fact \ref{support} and the fact that $x^2$ is the minimal element in $\Gamma_{\ll v}$, we have $i  = n-1$, and $\a = (0, ..., 0,u_{n-1}, v_n)$. Moreover, since $x^2_n = 0$, we have $v_n = 1$, which further implies that $\a = (0, ..., 0, 1, 1)$ which is a contradiction.

{\bf Case 2:} $x^1$ is the smallest element in $\Gamma_{\ll u}$ and $x^2$ is not the smallest element in $\Gamma_{\ll v}$. By Fact \ref{nonminimalx1}, this implies that $x^3 = \a + y - x^2$, where $y$ and $x^2$ are the minimal and next to minimal elements in $\Gamma_{\ll v}$. Since $x^3 \ge x^2$, this implies that $\a > x^2 > y$. 

If $v_n < d$, by Fact \ref{support}, the smallest element $y$ in $\Gamma_{\ll v}$ is of the form $y = (0, ..., s_j, v_{j+1}, ..., v_n)$ for some $j < n$. Nevertheless, since $v = \a + y$, thus $v_n = a_n + y_n$. Therefore $a_n = 0$, which is a contradiction.

If $v_n \ge d$, since $\a \ll v$, we have $v_{n-1} \ge 1$. Therefore $x^2 = (0, ..., 0, 1, d-1)$. Since $x^1$ is the smallest element in $\Gamma_{\ll u}$, we have $x^1 = (0, ..., 0, d)$ which is also the smallest element in $\Gamma_{\ll m}$, which is a contradiction.

{\bf Case 3:} $x^1$ is not the smallest element in $\Gamma_{\ll u}$. By Fact \ref{nonminimalx1}, and the fact that $x^1x^2$ is minimal, we have $x^2 = \a + y-x^1$, where $y$ and $x^1$ are the minimal and next to minimal elements in $\Gamma_{\ll u}$. Since $x^2 \ge x^1$, this implies $\a > x^1 > y$. 

If $u_n < d$, by Fact \ref{support}, the smallest element $y$ in $\Gamma_{\ll u}$ is of the form $y = (0, ..., s_i, u_{i+1}, ..., u_n)$ for some $i < n$. Since $u = \a + y$, we have $a_n = 0$, which is a contradiction. 

If $u_n \ge d$, since $\a \ll u$, we have $u_{n-1} \ge 1$. Therefore, $x^1 = (0, ...,0,1, d-1)$. In this case, we have $x^2 = \a + (0, ..., 0, -1, 1)$. Since $\a \neq (0, ..., 2, d-2)$, we have $x^2 > x^1$, and finally $x_3 \ge x_2$. 

In all cases, the smallest element $x^1$ has to be of the form $(0, ...,0, 1, d-1)$, and $x^3 \ge x^2 > x^1$. Therefore, if $(i-1)(i)$ is $x^1$, then $(i)(i+1)$ and $(i+1)(i+2)$ cannot be of this form. The lemma follows.
\end{proof}

\begin{Lemma}\label{homology1} Let $n \ge 3$. Let $\mathcal F$ be a non-empty simplicial complex on $[n]$ of dimension $n-3$ whose facets are of the form $[n]\setminus \{i,i+1\}$ for some $i$. Furthermore, assume that if $[n]\setminus \{i, i+1\}$ is a facet of $\mathcal F$, then $[n] \setminus \{i+1, i+2\}$, and $[n] \setminus \{i+2,i+3\}$ are not facets of $F$. Then $\mathcal F$ has trivial homology groups.
\end{Lemma}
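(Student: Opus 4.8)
The plan is to show that $\mathcal{F}$ is contractible by exhibiting it as a union of full-dimensional faces glued along a contractible pattern, or more efficiently, by a discrete-Morse / collapsing argument. First I would set up notation: a facet $[n]\setminus\{i,i+1\}$ of $\mathcal{F}$ is determined by the ``gap'' $\{i,i+1\}$, and the hypothesis says that the set $G \subseteq \{1,\dots,n-1\}$ of admissible gap-starts $i$ has the property that no two of its elements are within distance $2$; i.e.\ if $i \in G$ then $i+1, i+2 \notin G$. Equivalently, distinct gaps $\{i,i+1\}$ and $\{j,j+1\}$ with $i<j$ satisfy $j \ge i+3$, so the gaps are pairwise disjoint and moreover separated by at least one index. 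The key structural consequence is that $\mathcal{F}$ is the union of the simplices $\sigma_i := [n]\setminus\{i,i+1\}$ over $i \in G$, and the intersection $\sigma_i \cap \sigma_j$ for $i \ne j$ is $[n]\setminus(\{i,i+1\}\cup\{j,j+1\})$, a face of codimension $4$.

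Next I would compute the homology via the nerve lemma or, more concretely, by induction on $|G|$ peeling off one facet at a time. Order $G = \{i_1 < i_2 < \dots < i_r\}$ and set $\mathcal{F}_t = \sigma_{i_1}\cup\cdots\cup\sigma_{i_t}$. Each $\sigma_{i_t}$ is a simplex (contractible), and I claim $\sigma_{i_t}\cap \mathcal{F}_{t-1}$ is contractible as well: it equals $\bigcup_{s<t}\big([n]\setminus(\{i_s,i_s+1\}\cup\{i_t,i_t+1\})\big)$, which is the boundary-type complex on the vertex set $[n]\setminus\{i_t,i_t+1\}$ consisting of all faces missing at least one of the separated pairs $\{i_s,i_s+1\}$. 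Since these removed pairs are pairwise disjoint, this intersection is a join of the full simplex on the ``untouched'' vertices with a complex of the form (union of boundaries of disjoint pairs of vertices inside one simplex), and such a join with a nonempty simplex is a cone, hence contractible — provided $\mathcal{F}_{t-1}$ actually meets $\sigma_{i_t}$, which holds because $n\ge 3$ guarantees there is always at least one common vertex. Then the Mayer--Vietoris sequence for $\mathcal{F}_t = \mathcal{F}_{t-1}\cup\sigma_{i_t}$, together with $\widetilde H_*(\sigma_{i_t}) = \widetilde H_*(\sigma_{i_t}\cap\mathcal{F}_{t-1}) = 0$, gives $\widetilde H_*(\mathcal{F}_t) \cong \widetilde H_*(\mathcal{F}_{t-1})$, and induction from the base case $\mathcal{F}_1 = \sigma_{i_1}$ (a simplex) finishes the argument.

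I expect the main obstacle to be verifying the contractibility of the intersection $\sigma_{i_t}\cap\mathcal{F}_{t-1}$ cleanly, since one must check both that it is nonempty (needs $n\ge 3$ and the fact that any two gaps together remove at most $4$ of the $n$ indices, so for small $n$ this has to be handled — in fact when $n=3$ or $n=4$ there can be only one gap and $\mathcal{F}$ is a single simplex) and that the ``separation by distance $\ge 3$'' hypothesis is exactly what makes the removed pairs disjoint, so that the intersection complex decomposes as a join with a nonempty simplex on a common vertex and is therefore a cone. An alternative to Mayer--Vietoris that might streamline the write-up is to observe directly that $\mathcal{F}$ collapses onto a single facet: one can elementary-collapse free faces of $\sigma_{i_r}$ not in $\mathcal{F}_{r-1}$ down to the intersection, reducing to $\mathcal{F}_{r-1}$, and iterate; but Mayer--Vietoris is the safest route to ``trivial homology groups'' over an arbitrary coefficient field $K$. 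Either way, the conclusion $\widetilde H_*(\mathcal{F},K) = 0$ follows.
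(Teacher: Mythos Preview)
Your argument is correct, but it is considerably more elaborate than necessary, and the paper's proof is a two-line observation that you almost stumble onto yourself. The paper simply notes that $\mathcal F$ is a cone: if $[n]\setminus\{1,2\}$ is not a facet then every facet contains the vertex $1$; if $[n]\setminus\{1,2\}$ is a facet then the hypothesis rules out $[n]\setminus\{2,3\}$ and $[n]\setminus\{3,4\}$, so every facet contains the vertex $3$. Either way $\mathcal F$ is a cone and has trivial reduced homology.

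Your inductive Mayer--Vietoris approach works, but notice that the crucial step --- showing $\sigma_{i_t}\cap\mathcal F_{t-1}$ is a cone because the set $U$ of ``untouched'' vertices is nonempty --- already proves the whole lemma directly, without any induction. The same vertex $i_1+2$ (or indeed any element of $U$ for the full $G$) lies in \emph{every} $\sigma_{i_s}$, so $\mathcal F$ itself is a cone over it. In effect you are running an induction whose inductive step reproves the base case each time. What your write-up buys is a template that generalises to the setting of the next lemma (where facets of two different sizes occur and no single cone point exists, so Mayer--Vietoris genuinely is needed); what the paper's proof buys is brevity and the immediate identification of the geometric reason the complex is acyclic. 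Your description of the intersection as ``a join with a complex of the form (union of boundaries of disjoint pairs)'' is slightly garbled --- it is the join of the simplex on $U$ with $\bigcup_{s<t}\bigl(W\setminus\{i_s,i_s+1\}\bigr)$ on $W=\bigcup_{s<t}\{i_s,i_s+1\}$ --- but the conclusion that this join is a cone whenever $U\neq\emptyset$ is correct, and your observation that $i_1+2\in U$ secures this.
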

\begin{proof} If $[n]\setminus \{1,2\}$ is not a facet of $\mathcal F$, then all facets of $\mathcal F$ contain $1$. If $[n] \setminus \{1,2\}$ is a facet of $\mathcal F$, then $[n] \setminus \{2,3\}$ and $[n]\setminus \{3,4\}$ are not facets of $\mathcal F$. Thus all facets of $\mathcal F$ contain $3$. In any case $\mathcal F$ is a cone, thus has trivial homology groups.
\end{proof}

\begin{Lemma}\label{homology} Let $n \ge 4$. Let $\mathcal F$ be a non-empty simplicial complex on $[n]$ of dimension $n - 2$ whose facets are of the form $[n] \setminus \{i\}$ or $[n]\setminus \{i,i+1\}$ for some $i$. Furthermore, assume that if $[n]\setminus \{i, i+1\}$ is a facet of $\mathcal F$, then $[n] \setminus \{i+1, i+2\}$, and $[n] \setminus \{i+2,i+3\}$ are not facets of $F$. Then $\mathcal F$ has no homology in dimensions $\le n-5$.
\end{Lemma}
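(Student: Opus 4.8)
The plan is to reduce the statement to Lemma \ref{homology1} by peeling off the codimension-$1$ facets one at a time via a Mayer--Vietoris argument, and then to cone down whatever remains. Concretely, write $\mathcal F$ as a union $\mathcal F = \mathcal G \cup ([n]\setminus\{i\})$ where $\mathcal G$ is the subcomplex of $\mathcal F$ obtained by discarding the single facet $[n]\setminus\{i\}$ (equivalently, $\mathcal G$ is generated by all the other facets of $\mathcal F$). The simplex $[n]\setminus\{i\}$ is contractible, and the intersection $\mathcal G \cap ([n]\setminus\{i\})$ is a subcomplex of the boundary $\partial([n]\setminus\{i\})$, hence has dimension at most $n-3$; I want to identify it precisely as the complex consisting of those facets $[n]\setminus\{j\}$ or $[n]\setminus\{j,j+1\}$ of $\mathcal G$ that do not contain the vertex $i$, i.e. with $j = i$ impossible but $j$ adjacent to $i$ contributing the codimension-$2$ faces. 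The key point is that this intersection again satisfies the hypotheses of the lemma \emph{one dimension down}, or is already a cone.

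The main step is therefore an induction on the number of codimension-$1$ facets $[n]\setminus\{i\}$ present in $\mathcal F$. If there are none, then every facet has the form $[n]\setminus\{i,i+1\}$, but $\mathcal F$ has dimension $n-2$ while such facets have dimension $n-3$ — so this base case forces $\mathcal F$ to be $(n-3)$-dimensional, contradiction, unless we instead treat the true base of the induction as: $\mathcal F$ is a single simplex $[n]\setminus\{i\}$, which is contractible. For the inductive step, pick a codimension-$1$ facet $[n]\setminus\{i\}$, split $\mathcal F = \mathcal G \cup ([n]\setminus\{i\})$ as above, and run Mayer--Vietoris:
\begin{equation*}
\cdots \to \tilde H_k(\mathcal G \cap ([n]\setminus\{i\})) \to \tilde H_k(\mathcal G) \oplus \tilde H_k([n]\setminus\{i\}) \to \tilde H_k(\mathcal F) \to \tilde H_{k-1}(\mathcal G \cap ([n]\setminus\{i\})) \to \cdots.
\end{equation*}
Since $[n]\setminus\{i\}$ has vanishing reduced homology, for $k \le n-5$ it suffices to know that $\tilde H_k(\mathcal G) = 0$ and $\tilde H_{k-1}(\mathcal G\cap([n]\setminus\{i\})) = 0$. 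The first vanishing is the inductive hypothesis applied to $\mathcal G$ (which has one fewer codimension-$1$ facet, still dimension $n-2$ if it retains another such facet, and still satisfies the non-adjacency condition since removing a facet only removes constraints). The second requires analyzing $\mathcal G\cap([n]\setminus\{i\})$: it is a complex on the vertex set $[n]\setminus\{i\}$, which after relabeling has $n-1$ vertices, its facets are among $([n]\setminus\{i\})\setminus\{j\}$ and $([n]\setminus\{i\})\setminus\{j,j+1\}$, and the non-adjacency condition is inherited; by induction on $n$ it has no homology in dimensions $\le (n-1)-5 = n-6$, which covers $k-1 \le n-6$, i.e. $k \le n-5$, exactly the range we need.

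The hard part will be bookkeeping the two nested inductions (on $n$ and on the number of codimension-$1$ facets) simultaneously and, more delicately, pinning down the facet structure of the intersection $\mathcal G\cap([n]\setminus\{i\})$: one must check that deleting the vertex $i$ from each facet of $\mathcal G$ produces exactly the faces $([n]\setminus\{i\})\setminus\{j\}$ (from facets $[n]\setminus\{j\}$ with $j\ne i$, which lose vertex $i$ and become codimension $1$ in the smaller complex — wait, these have dimension $n-3$, so they are the top faces) and $([n]\setminus\{i\})\setminus\{i-1,i+1\}$-type degenerate faces coming from $[n]\setminus\{i-1,i\}$ and $[n]\setminus\{i,i+1\}$, and that the resulting complex is still of the controlled form, possibly after checking it is a cone (which happens, as in Lemma \ref{homology1}, whenever the relevant boundary facets are absent). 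If the intersection fails to be of the exact syntactic form required, I expect it is nonetheless a cone — all its facets sharing a common vertex forced by the non-adjacency hypothesis — and Lemma \ref{homology1}'s argument applies verbatim to conclude triviality of its homology, which is more than enough.
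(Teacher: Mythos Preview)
Your approach is essentially identical to the paper's: both run a double induction on $n$ and on the number of top-dimensional facets, peel off a facet $F=[n]\setminus\{i\}$, apply Mayer--Vietoris to $\mathcal F=F\cup\mathcal G$, handle $\mathcal G$ by the inner induction (falling back to Lemma~\ref{homology1} when no codimension-$1$ facets remain), and handle $F\cap\mathcal G$ by the outer induction on $n$. The paper is equally terse about the facet structure of $F\cap\mathcal G$ and the base case where $\mathcal G$ drops to dimension $n-3$; your instinct that Lemma~\ref{homology1} absorbs those cases is exactly how the paper disposes of them.
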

\begin{proof} We prove by induction on $n$. The cases $n = 4$ and $n = 5$ are trivial. We may assume that $n \ge 6$. 

Let $\mathcal F$ be a non-empty simplicial complex on $[n]$ satisfying the condition of the Lemma, where $n \ge 6$. We will prove by induction on the number of facets of $\mathcal F$ that it has no homology in dimension $\le n-5$. If it has only one facet, the statement is trivial.

Let $F$ be a facet of $\mathcal F$ of dimension $n-2$. Write $\mathcal F = F \cup \mathcal G$, where $\mathcal G$ is a non-empty simplicial complex on $[n]$. We have either $\mathcal G$ satisfies the condition of the Lemma and that $\mathcal G$ has fewer facets than $\mathcal F$ or $\mathcal G$ satisfies the condition of the Lemma \ref{homology1}. By induction and Lemma \ref{homology1}, $\mathcal G$ has no homology in dimensions $\le n-5$. Applying the Mayer-Vietoris sequence, for each $i$, we get an exact sequence
$$H_i(\mathcal G) \to H_i (\mathcal F) \to H_{i-1}(F \cap \mathcal G).$$
Fix $i \le n-5$. By induction, the first term in the exact sequence is zero. Moreover, $F \cap \mathcal G$ is a non-empty simplicial complex on a set of $n-1$ vertices satisfying the condition of either the Lemma \ref{homology} or the Lemma \ref{homology1}. Since $i-1\le n-6$, by induction on $n$, the last term is zero. Therefore $H_i(\mathcal F) = 0$. 
\end{proof}

\begin{proof}[Proof of Theorem \ref{main}] By Theorem \ref{Koszul}, we may assume that $\a$ is different from $(0, ...,0, 2, d-2)$ and that $\Gamma$ is $2$-full. Assume that $|\l| \ge 7$, i.e., $\Gamma_\l$ is pure simplicial complex of dimension $n-1 = |\l|-2 \ge 5$. We will prove by downward induction on $i$ that $F_{<p^i}$ has no homology in dimensions $\le n-6$. 

When $i > k$, then $F_{<p^i}$ is $\Delta_\l$ which is known to have at most homology in dimension $n-1$. Assume that it is true for $i > 1$. Applying the Mayer-Vietoris sequence, for any $j\le n-6$ we have an exact sequence
$$ H_{j}(F_{<p^i} \cap p^i) \to H_j(F_{<p^i}) \to H_j(F_{<p^{i+1}}).$$
By induction the last term is zero. By Lemma \ref{homology}, and property of $F_{<p^i} \cap p^i$, the first term is zero. Therefore $H_j(F_{<p^i}) = 0$. In particular, when $i = 1$, $F_{<p^1} = \Gamma_{\lambda}$ has no homology in dimensions $\le n-6$. By Theorem \ref{relativeHomology}, $\reg_R K \le 5$. By \cite[Theorem 2]{AP}, $R$ is Koszul.
\end{proof}

\section*{Acknowledgements}
I would like to thank my advisor David Eisenbud for useful conversations and comments on earlier drafts of the paper.


\begin{thebibliography}{1}

\bibitem{AP} L. Avramov, I. Peeva, 
\textit{Finite regularity and Koszul algebras.},  
Amer. J. Math. 123 (2001), no. 2, 275-281.



\bibitem{BH} W. Bruns, J. Herzog, 
\textit{Semigroup rings and simplicial complexes.},  
J. Pure Appl. Algebra 122 (1997), no. 3, 185-208.

\bibitem{C} G. Caviglia, 
\textit{The pinched Veronese is Koszul.},
J. Algebraic Combin. 30 (2009), no. 4, 539-548.


\bibitem{CC} A. Conca, G. Cavaglia, 
\textit{Koszul property of projections of the Veronese cubic surface.},  
Adv. Math 234 (2013), 404-413.


\bibitem{LS} O. Laudal, A. Sletsjoe,  
\textit{Betti numbers of monoid algebras. Applications to $2$-dimensional torus embeddings.}, 
Math. Scand., 56 (1985), 145-162.


\bibitem{NV} H. Nguyen, T. Vu, 
\textit{Koszul algebras and the Frobenius endomorphism.}, 
arXiv:1303.5160v2.


\bibitem{PP} A. Polishchuk, L. Positselski,
\textit{Quadratic algebras.},
Univ. Lecture Series, vol. 37, Amer. Math. Soc., Providence, RI, (2005).


\bibitem{PRS} I. Peeva, V. Reiner, B. Sturmfels,
\textit{How to shell a monoid.},
Math. Ann. 310 (1998), no. 2, 379-393.


\bibitem{R} J. Roos,
\textit{Commutative non Koszul algebras having a linear resolution of arbitrarily high order. Applications to torsion in loop space homology}, C. R. Acad. Sci. 316 (1993), 1123-1128.


\bibitem{S} B. Sturmfels, 
\textit{Gr\"obner basis and convex polytopes.},
University Lecture Series, 8. Amer. Math. Soc., Providence, RI (1996).
 
\bibitem{T} M. Tancer, 
\textit{Shellability of the higher pinched Veronese posets.},
arXiv:1305.3159v2

\end{thebibliography}
\end{document}